\RequirePackage{plautopatch}
\documentclass[dvipdfmx,12pt]{amsart}

\usepackage[top=25truemm,bottom=20truemm,left=20truemm,right=20truemm]{geometry}
\usepackage[noadjust]{cite}
\usepackage[dvipsnames]{xcolor}
\usepackage{enumitem}
\usepackage[dvipdfmx]{graphicx} 
\usepackage{amsmath,amssymb,amsfonts,amssymb,amscd} 
\usepackage{amsthm} 
\usepackage{thmtools}
\usepackage[all]{xy} 
\usepackage{mathtools}
\usepackage[hidelinks]{hyperref}
\usepackage{cleveref} 

\theoremstyle{plain}
\newtheorem{thm}{Theorem}[section]
\newtheorem{cor}[thm]{Corollary}
\newtheorem{lem}[thm]{Lemma}
\newtheorem{prop}[thm]{Proposition}
\theoremstyle{definition}
\newtheorem{dfn}[thm]{Definition}
\newtheorem{ex}[thm]{Example}
\newtheorem{rmk}[thm]{Remark}
\crefname{thm}{Theorem}{Theorems}
\crefname{cor}{Corollary}{Corollarys}
\crefname{lem}{Lemma}{Lemmas}
\crefname{prop}{Proposition}{Propositions}
\crefname{dfn}{Definition}{Definitions}
\crefname{ex}{Example}{Examples}
\crefname{rmk}{Remark}{Remarks}
\crefname{section}{Section}{Sections}
\crefname{subsection}{Subsection}{Subsections}

\newcommand{\RR}{\mathbb{R}}

\newcommand{\ZZ}{\mathbb{Z}}
\newcommand{\qop}{\triangleleft}
\newcommand{\Qdl}{\mathbf{Qdl}}
\newcommand{\RGSet}{\mathbf{RGSet}}
\newcommand{\Inn}{\mathrm{Inn}}
\newcommand{\Dis}{\mathrm{Dis}}
\newcommand{\Conj}{\mathrm{Conj}}
\newcommand{\GAlex}{\mathrm{GAlex}}

\renewcommand{\SS}{\mathfrak{S}}

\date{\today}
\title{Quandles from group actions and a Cayley-type embedding theorem} 
\author{Ryoya Kai} 
\subjclass[2020]{Primary 20N02; Secondary 57K12}
\keywords{quandle, group action, Cayley-type embedding theorem}
\address{
	(R. Kai) 
  Center for Educational Research of Science and Mathematics,
	Nara University of Education, 
	Takabatake-Cho, Nara City, 
	Nara, 630-8301, Japan}
\email{kai.ryoya.d8@cc.nara-edu.ac.jp}

\begin{document}
\begin{abstract}
A quandle is an algebraic system that can be regarded as a generalization of the conjugation operation in groups. 
We study a quandle construction associated with group actions and determine its structural properties, including its inner automorphism group, connected components, and subquandles. 
As a principal application, we establish a Cayley-type embedding theorem for finite quandles. 
Applying the construction to the natural action of the symmetric group, we obtain, for each $n$, a single quandle into which every quandle of cardinality $n$ embeds.
\end{abstract}

\maketitle
\section{Introduction}

Quandles are algebraic systems introduced independently by Joyce \cite{Joyce-1982-ClassifyingInvariantKnotsKnota} and Matveev \cite{Matveev-1982-DistributiveGroupoidsKnotTheory} in the study of knots.
They also arise naturally in several other areas of mathematics.
Typical examples include conjugation in groups and point symmetries of symmetric spaces. 
Thus, quandle theory is closely related to topology, group theory and geometry, and it is natural to ask whether fundamental constructions and theorems in these areas admit quandle analogues.

One of the most fundamental results in group theory is Cayley's theorem,
which states that every finite group of order $n$ embeds into the symmetric group $\SS_n$. 
This naturally leads to the following question in quandle theory: 
can one construct, from the natural action of $S_n$ on $[n]:=\{1,\ldots,n\}$, 
a quandle into which every quandle of cardinality $n$ embeds?
The main purpose of this paper is to give an affirmative answer to this question.
The most immediate analogue of Cayley’s embedding is the natural representation $x \mapsto s_x$. 
However, this map need not be injective, since distinct elements of a quandle may induce the same point symmetry.
The construction below provides a way to retain the underlying element together with its symmetry and thereby overcome this obstruction.

To this end, we study a general construction of quandles from group actions. 
Let $\Phi: X\times G \to X$ be a right action of a group $G$ on a set $X$, 
and write $x^g:=\Phi(x,g)$ for $(x,g)\in X\times G$.
Then, the product $X \times G$ admits a quandle structure as follows:
\begin{dfn}[{\cref{lem:ActQdleStr} and \cref{def:ActQdle}}]
Define a binary operation on $X\times G$ by
\[
  (x,g) \qop_\Phi (y,h)
  :=
  \bigl(x^{g^{-1}h},h^{-1}gh\bigr).
\]
Then $(X\times G,\qop_\Phi)$ is a quandle, which we denote by
$X\times_\Phi G$.
\end{dfn}
This construction can also be obtained by applying the rack-to-quandle construction of Andruskiewitsch and Graña \cite{Andruskiewitsch-2003-RacksPointedHopfAlgebrasa} to the matrix racks of Fenn and Rourke \cite{Fenn-1992-RacksLinksCodimensionTwoa}; see \cref{rmk:MatRack_Kink}.
Although this construction is implicit in the preceding work, our aim is to develop its structure theory systematically. 
Specifically, we study its functoriality, inner automorphism group, connected components, subquandles, and relation to augmented quandles (see \cref{sec3}). 
More precisely, we explicitly determine the inner automorphism group of $X\times_\Phi G$ as a direct product involving a quotient of $G$ as follows.
\begin{thm}[\cref{cor:InnerAutomorphismGroup2}]
    For $m \in \ZZ$, define the map $\zeta_m: X \times_\Phi G \to X \times_\Phi G$ by $\zeta_m(x, g) := (x^{g^m},g)$.
    Then, we have an isomorphism 
    \[
        \Inn(X \times_\Phi G) \cong Z \times G/(Z(G) \cap K), 
    \] 
    where $Z = \{\zeta_m \mid m \in \ZZ\}$, $Z(G)$ is the center of $G$, and $K = \{g \in G \mid x^g = x \text{ for any $x \in X$}\}$ is the kernel of $\Phi$.
\end{thm}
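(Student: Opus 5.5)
The plan is to decompose each right multiplication $s_{(y,h)}\colon (x,g)\mapsto (x,g)\qop_\Phi(y,h)$ into two commuting pieces, and then read off $\Inn(X\times_\Phi G)$ as an internal direct product. For $h\in G$ define $c_h\colon X\times_\Phi G\to X\times_\Phi G$ by $c_h(x,g):=(x^h,h^{-1}gh)$. A direct check gives
\[
  c_h\bigl(\zeta_{-1}(x,g)\bigr)=c_h(x^{g^{-1}},g)=\bigl(x^{g^{-1}h},h^{-1}gh\bigr)=(x,g)\qop_\Phi(y,h),
\]
so $s_{(y,h)}=c_h\circ\zeta_{-1}$, which is independent of $y$. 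We also have $\zeta_m=\zeta_1^m$, so $Z$ is the cyclic group generated by the bijection $\zeta_1$; in particular $Z$ may be finite.

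The key commutation identity is $c_h\circ\zeta_m=\zeta_m\circ c_h$. Indeed, both sides send $(x,g)$ to $(x^{g^mh},h^{-1}gh)$, because $h(h^{-1}gh)^m=g^mh$. Moreover, $h\mapsto c_h$ is a right action: $c_k\circ c_h=c_{hk}$. Hence $h\mapsto c_{h^{-1}}$ is a group homomorphism $G\to\mathrm{Sym}(X\times G)$, and its image $C=\{c_h\mid h\in G\}$ is a subgroup. Since $s_{(y,1)}=\zeta_{-1}$ and $s_{(y,h)}\circ s_{(y,1)}^{-1}=c_h$, the group $\Inn(X\times_\Phi G)$ contains both $Z$ and $C$. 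Conversely, it is generated by the elements $c_h\zeta_{-1}$, all of which lie in $ZC$. Therefore $\Inn(X\times_\Phi G)=ZC$, and since $Z$ and $C$ commute elementwise, this is the image of $Z\times C$ under multiplication.

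It remains to show two things.
\begin{enumerate}[label=(\roman*)]
\item $Z\cap C=\{\mathrm{id}\}$. Suppose $\zeta_m=c_h$. Comparing second coordinates gives $h^{-1}gh=g$ for all $g$, so $h\in Z(G)$. Comparing first coordinates at $g=1$ gives $x^h=x$ for all $x$. Hence $c_h=\mathrm{id}$.
\item $\ker(h\mapsto c_{h^{-1}})=Z(G)\cap K$. The map $c_h$ is the identity exactly when $h$ is central and $h$ acts trivially on $X$.
\end{enumerate}
From these, $C\cong G/(Z(G)\cap K)$, and $\Inn(X\times_\Phi G)=Z\times C\cong Z\times G/(Z(G)\cap K)$.

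The only delicate point is bookkeeping rather than substance. One must get the composition order right, since this is a right action and right multiplications are composed as maps. This is why $h\mapsto c_{h^{-1}}$, rather than $h\mapsto c_h$, is used to obtain an honest homomorphism.
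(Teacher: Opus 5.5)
Your proof is correct and is essentially the paper's argument: your $c_h$ is exactly the paper's splitting map $i(h)=\zeta_1 s_h$, and your check that $Z\cap C$ is trivial is the same computation the paper uses to show that $F(\zeta_n s_g)=[g]$ is well defined. You have simply recast the split central extension $1\to Z\to \Inn(X\times_\Phi G)\to G/(Z(G)\cap K)\to 1$ as an internal direct product $\Inn(X\times_\Phi G)=Z\cdot C$, with the single factorization $s_{(y,h)}=c_h\circ\zeta_{-1}$ doing the work of the paper's identities $s_{gh}=\zeta_1 s_g s_h$ and $s_g^{-1}=s_{g^{-1}}\zeta_2$.
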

In addition, the set of connected components, denoted by $\pi_0(X \times_\Phi G)$, is described in terms of double cosets involving stabilizers and centralizers.
\begin{thm}[\cref{thm:pi_0}]
    Let $B$ be a complete set of representatives for $X/G$, and let $R$ be a complete set of representatives for the conjugacy classes of $G$.
    Then there is a bijection 
    \[
    \pi_0(X \times_\Phi G) \cong \sqcup_{(b,r) \in B \times R)} C_G(r) \backslash G /H_b,
    \]
    where $H_b$ is the stabilizer at $b$ and $C_G(r)$ is the centralizer of $r$ in $G$.
\end{thm}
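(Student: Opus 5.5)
The plan is to find a complete invariant for the $\Inn$-orbits on $X\times_\Phi G$ and then prove transitivity on each fibre of that invariant. Recall that $\pi_0$ is the set of orbits of $\Inn(X\times_\Phi G)$, which is generated by the symmetries $s_{(y,h)}\colon (x,g)\mapsto (x^{g^{-1}h},h^{-1}gh)$. Since $s_{(y,h)}$ does not depend on $y$, the generators are the maps $s_h$ for $h\in G$, together with their inverses.

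\textbf{Parametrization and the map to the double cosets.} Every element of $X\times_\Phi G$ can be written as $(b^a, r^c)$ with $b\in B$, $r\in R$ and $a,c\in G$. Here $b$ and $r$ are uniquely determined. The element $a$ is determined up to $a\mapsto ka$ with $k\in H_b$. The element $c$ is determined up to $c\mapsto zc$ with $z\in C_G(r)$, because $r^c=r^{c'}$ holds exactly when $c'c^{-1}\in C_G(r)$. Under these changes $ca^{-1}$ becomes $z\,ca^{-1}k^{-1}$. Hence the assignment
\[
(b^a,r^c)\longmapsto \bigl((b,r),\,C_G(r)\,ca^{-1}H_b\bigr)
\]
is a well-defined map $\Psi\colon X\times_\Phi G\to \sqcup_{(b,r)} C_G(r)\backslash G/H_b$. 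It is surjective, since $(b^{d^{-1}},r)$ is sent to the class of $d$.

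\textbf{Invariance.} Apply $s_h$ to $(x,g)=(b^a,r^c)$. The result is $(b^{ag^{-1}h}, r^{ch})$, so we may take $a'=ag^{-1}h$ and $c'=ch$. Then
\[
c'a'^{-1}=c\,g\,a^{-1}=c\,(c^{-1}rc)\,a^{-1}=r\,ca^{-1}.
\]
Because $r\in C_G(r)$, the double coset is unchanged. The same holds for $s_h^{-1}$, and the orbit of $b$ and the class of $r$ are obviously preserved. Therefore $\Psi$ is constant on connected components and descends to a surjection from $\pi_0(X\times_\Phi G)$.

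\textbf{Injectivity.} This is the main step. Start from $(b^a,r^c)$ and apply $s_{c^{-1}}$; this gives an element of the form $(b^{a_1},r)$. So every component contains an element whose second coordinate is exactly $r$, and its invariant is $C_G(r)a_1^{-1}H_b$.

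Now suppose $(b^{a_1},r)$ and $(b^{a_2},r)$ have the same invariant. Then $a_2^{-1}=za_1^{-1}k$ with $z\in C_G(r)$ and $k\in H_b$, so $b^{a_2}=b^{k^{-1}a_1z^{-1}}=b^{a_1z^{-1}}$. To reach this element, first apply $s_{z^{-1}}$ to $(b^{a_1},r)$, which gives $(b^{a_1r^{-1}z^{-1}},r)$ because $z$ commutes with $r$. Next note that $s_1(x,r)=(x^{r^{-1}},r)$, so $s_1^{-1}(x,r)=(x^{r},r)$. Conjugating by the appropriate powers of $s_1$ absorbs the factor $r^{-1}$: applying $s_1^{-1}$ before $s_{z^{-1}}$ gives $(b^{a_1rr^{-1}z^{-1}},r)=(b^{a_1z^{-1}},r)$. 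Thus the two elements lie in the same component, so $\Psi$ induces the claimed bijection.

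The delicate part is this bookkeeping of left and right actions and of the order in which the $s_h$ are composed, since the action is a right action. I would verify the identity $c'a'^{-1}=r\,ca^{-1}$ carefully, because the whole argument rests on it.
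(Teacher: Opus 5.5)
Your proof is correct, and your invariant is exactly the paper's: with $c=u_g$ and $a=v_x$, your class $C_G(r)\,ca^{-1}H_b$ is the paper's $C_G(r_g)u_gv_x^{-1}H_{b_x}$.

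The organization differs, though. The paper works in two stages.
\begin{enumerate}
\item It shows that connected components coincide with orbits of the diagonal action $(x,g)\mapsto(x^h,h^{-1}gh)$. One direction uses the closed formula for iterated operations, the other uses $s_h\zeta_1$.
\item It identifies the diagonal orbit space with the union of double cosets by a separate well-definedness, surjectivity and injectivity computation.
\end{enumerate}

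You bypass the diagonal action entirely. You check invariance only on the generators $s_h$, via the identity $c'a'^{-1}=r\,ca^{-1}$: each generator just left-multiplies the coset representative by $r^{\pm1}\in C_G(r)$. This removes any need for the word formula. Injectivity then comes from the normal form $(b^{a_1},r)$, reached by $s_{c^{-1}}$, together with the move $s_{z^{-1}}\circ s_1^{-1}$.

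Two remarks on that move. First, $s_1^{-1}$ is the paper's $\zeta_1$, which is central in $\Inn(X\times_\Phi G)$. So ``conjugating by powers of $s_1$'' is the wrong phrase: you are simply precomposing, and the order does not matter. Second, $s_{z^{-1}}\circ\zeta_1$ is precisely the diagonal action by $z^{-1}$, so the paper's intermediate step is quietly embedded in your argument.

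What each route buys: yours is shorter and self-contained. The paper's isolates the description of components as diagonal orbits, which it reuses later, both for $\mathcal{I}_\Phi$ being a union of components and for characterizing the components of $Q\SS_n$.
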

These results show that the construction retains substantial information from the underlying group action and, in particular, provides the framework for the embedding theorem below.

Applying this construction to the natural action $\Phi_n: [n]\times \SS_n \to [n]$ of the symmetric group $\SS_n$ on $[n]$ yields the quandle that serves as the target in our Cayley-type embedding theorem.
Define
\[
  Q\SS_n:= [n]\times_{\Phi_n} \SS_n.
\]
For quandles of cardinality $n$, the quandle $Q\SS_n$ plays a role analogous to that of $\SS_n$ for groups of order $n$. 
Our main embedding theorem is as follows.
\begin{thm}[Cayley-type embedding theorem; \cref{thm:CayEmbThm}]
\label{thm:intro-cayley}
Every quandle of cardinality $n$ admits an injective quandle
homomorphism
\[
  X\longrightarrow Q\SS_n.
\]
\end{thm}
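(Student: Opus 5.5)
The plan is to write down an explicit map and check it directly; no structural results from \cref{sec3} are needed. Let $X$ be a quandle of cardinality $n$, and fix a bijection $X \cong [n]$ so that we regard $X=[n]$ as a set. For $y\in X$, the point symmetry $s_y\colon X\to X$, $z\mapsto z\qop y$, is a bijection of $[n]$ by the quandle axioms, so $s_y\in\SS_n$. I will treat $\SS_n$ as acting on the right, $z^{s}$, with composition chosen so that $z^{gh}=(z^g)^h$; this is the convention for $\Phi_n$ that makes the formula in \cref{lem:ActQdleStr} a quandle operation. Define
\[
  f\colon X\longrightarrow Q\SS_n,\qquad f(x):=(x,s_x).
\]
The map $f$ is injective because its first coordinate is the identity of $[n]$. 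This is exactly how the construction retains the underlying element and overcomes the non-injectivity of $x\mapsto s_x$.

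It remains to check $f(x\qop y)=f(x)\qop_{\Phi_n} f(y)$, that is,
\[
  \bigl(x\qop y,\ s_{x\qop y}\bigr)=\bigl(x^{s_x^{-1}s_y},\ s_y^{-1}s_xs_y\bigr).
\]

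For the first coordinate, idempotency gives $x^{s_x}=x\qop x=x$. Hence $x^{s_x^{-1}}=x$, and so $x^{s_x^{-1}s_y}=x^{s_y}=x\qop y$.

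For the second coordinate, right self-distributivity gives
\[
  (z\qop y^{-1}\text{-preimage})\qop x)\qop y = z\qop(x\qop y)
\]
in the following precise form. For every $z$, writing $w=z^{s_y^{-1}}$, we have $(w\qop x)\qop y=(w\qop y)\qop(x\qop y)=z\qop(x\qop y)$. Thus $z^{s_y^{-1}s_xs_y}=z^{s_{x\qop y}}$, which gives $s_{x\qop y}=s_y^{-1}s_xs_y$ in the right-action composition order.

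The only delicate point I expect is bookkeeping of conventions: whether $\Phi_n$ composes permutations so that $z^{gh}=(z^g)^h$. If the paper's convention for $\SS_n$ is instead the left/functional one, I would replace $s_x$ by $s_x^{-1}$ (or compose in the opposite order) in the definition of $f$. The same two verifications then go through verbatim, since $s_x^{\pm1}$ fixes $x$ and conjugation is compatible with inversion. With this, $f$ is an injective quandle homomorphism $X\to Q\SS_n$.
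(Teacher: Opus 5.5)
Your proof is correct and is essentially the paper's. The paper's embedding $(f\times\rho)\circ\varphi_X$, $x_i\mapsto(i,\rho(s_{x_i}))$, becomes exactly your map $x\mapsto(x,s_x)$ once $X$ is identified with $[n]$. Your composition convention for $\SS_n$ (so that $z^{gh}=(z^g)^h$) is the one the paper fixes in Section 2, so no adjustment is needed.

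The only difference is how the homomorphism property is obtained. The paper derives it from the natural embedding $\varphi_X$ (\cref{prop:aug_emb}, using that $x\mapsto s_x$ is an augmentation) together with functoriality (\cref{prop:EquivariantHom}) applied to the permutation representation. You instead check directly the two identities $x^{s_x}=x$ and $s_{x\qop y}=s_y^{-1}s_xs_y$ on which that machinery rests. The malformed display just before your ``precise form'' should simply be deleted.
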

Thus, for each $n$, the single quandle $Q\SS_n$ contains an isomorphic copy of every quandle of cardinality $n$.
In particular, this provides a uniform ambient object for all finite quandles of a fixed cardinality.
In this sense, $Q\SS_n$ provides a quandle-theoretic analogue of $\SS_n$ in Cayley's theorem. 
We also determine the basic structure of $Q\SS_n$ by applying the general results above. 
In particular, the connected components of $Q\SS_n$ admit a simple combinatorial description. 
More precisely, they are parametrized by pairs $(\lambda,d)$, where $\lambda$ is a partition of $n$ describing the cycle type of a permutation and $d$ is the length of the cycle containing the distinguished element of $[n]$.
See \cref{prop:conn_QSn} and \cref{cor:conn_QSn}.

The Cayley-type embedding theorem also yields a linear consequence.
Let $V$ be a finite-dimensional vector space over a field $\mathbb K$,
and let $\Psi_V\colon V\times GL(V)\to V$ be the natural right action. 
We define
\[
  GLQ(V):= V\times_{\Psi_V} GL(V).
\]
Note that $Conj(GL(V))$ can be identified with the subquandle $\{0\} \times GL(V)$ of $GLQ(V)$.
Homomorphisms into $GLQ(V)$ may be regarded as a generalization of ordinary linear representations of quandles, 
namely, quandle homomorphisms into $Conj(GL(V))$; see, for example, \cite{Elhamdadi-2018-FinitelyStableRacksRacka}.
Note that not every finite quandle can be embedded into a conjugation quandle; see, for example, \cite{Arai-2026-DetectingNonadmissibilityQuandlesColoringsa}.
On the other hand, combining \cref{thm:intro-cayley} with the permutation representation $\SS_n \to GL(\mathbb{K}^n)$, we obtain the following linear analogue.
\begin{thm}[\cref{thm:LinearRep}]
\label{thm:intro-glq}
Let $\mathbb K$ be any field. Every quandle of cardinality $n$ admits
an injective quandle homomorphism into $GLQ(\mathbb{K}^n)$.
\end{thm}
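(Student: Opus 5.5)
Take the embedding $\iota\colon X\to Q\SS_n$ from \cref{thm:intro-cayley} and push it forward along a quandle homomorphism $Q\SS_n\to GLQ(\mathbb{K}^n)$ induced by the permutation representation. Functoriality of the construction $X\times_\Phi G$ (\cref{sec3}) should give such a map in general; I verify it directly here for this case.

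Identify $[n]$ with the standard basis $e_1,\dots,e_n$ of $V=\mathbb{K}^n$. Let $\rho\colon \SS_n\to GL(V)$ be the permutation representation, chosen as a right action compatible with $\Phi_n$: writing $i^\sigma=\Phi_n(i,\sigma)$, set $e_i\cdot\rho(\sigma):=e_{i^\sigma}$. Then $\rho$ is an injective group homomorphism with $\Psi_V(e_i,\rho(\sigma))=e_{\Phi_n(i,\sigma)}$. Define
\[
F\colon Q\SS_n\to GLQ(V),\qquad F(i,\sigma):=(e_i,\rho(\sigma)).
\]

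\textbf{Homomorphism.} The first coordinates agree because
\[
e_i\cdot\rho(\sigma^{-1}\tau)=e_{i^{\sigma^{-1}\tau}}.
\]
The second coordinates agree because $\rho$ preserves conjugation:
\[
\rho(\tau)^{-1}\rho(\sigma)\rho(\tau)=\rho(\tau^{-1}\sigma\tau).
\]
Hence $F\bigl((i,\sigma)\qop(j,\tau)\bigr)=F(i,\sigma)\qop F(j,\tau)$.

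\textbf{Injectivity.} $F$ is injective since $i\mapsto e_i$ is injective and $\rho$ is faithful. This uses only that the $e_i$ are distinct and that the permutation matrices of distinct permutations are distinct, both of which hold over every field. So the characteristic of $\mathbb{K}$ plays no role.

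\textbf{Conclusion.} The composite $F\circ\iota\colon X\to GLQ(\mathbb{K}^n)$ is an injective quandle homomorphism.

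The only point needing care is matching conventions: $\rho$ must be a right action compatible with $\Phi_n$. If the paper's natural action on $V$ is by row vectors times matrices, use the permutation matrix $P_\sigma$ with $e_iP_\sigma=e_{i^\sigma}$. One then checks $P_{\sigma\tau}=P_\sigma P_\tau$ in that convention, so that $\rho$ really is a homomorphism rather than an anti-homomorphism. Everything else is routine.
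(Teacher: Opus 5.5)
Your proposal is correct and follows the paper's argument. The paper also composes the embedding of \cref{thm:CayEmbThm} with $e\times\rho\colon Q\SS_n\to GLQ(\mathbb{K}^n)$, where $(e,\rho)$ is the morphism of right actions given by $e(i)=e_i$ and the permutation representation $e_i\rho(\sigma)=e_{i^\sigma}$, and it notes that this map is injective; the only difference is that you verify the homomorphism property by hand, whereas the paper obtains it from the general functoriality statement \cref{prop:EquivariantHom}.
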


Therefore, although a finite quandle need not admit an injective homomorphism into $\Conj(GL(V))$,
every finite quandle admits an injective homomorphism into $GLQ(V)$ for some finite-dimensional vector space $V$. 
Thus, for each $n$, $GLQ(\mathbb K^n)$ provides a common linear ambient quandle for all quandles of cardinality $n$.

This paper is organized as follows. 
Section 2 recalls basic notions and examples of quandles. 
In Section 3, we develop the structure theory of quandles associated with group actions. 
Section 4 applies these results to the natural action of the symmetric group and proves the Cayley-type embedding theorem and its linear analogue.
\section{Review of quandles}
In this section, we review the notion of quandles.
For a non-empty set $X$ and a binary operation $\qop$ on $X$,
the pair $(X, \qop)$ is called a \emph{quandle} if the following conditions hold:
\begin{enumerate}
  \item[(Q1)] $x \qop x = x$ for any $x \in X$.
  \item[(Q2)] For any $y \in X$, the map $s_y: X \to X$ defined by $s_y(x) := x \qop y$ is bijective.
  \item[(Q3)] $(x \qop y) \qop z = (x \qop z) \qop (y \qop z)$ for any $x, y, z \in X$.
\end{enumerate}
When no confusion can arise, we simply write $X$ instead of $(X, \qop)$.
The map $s_y: X \to X$ is called the \emph{point symmetry} at $y \in X$.
We denote by $x \qop^n y := s_y^n(x)$.

A non-empty subset $A$ of a quandle $X$ is called a \emph{subquandle}
if $s_a(A) = A$ for every $a \in A$.
A subquandle inherits a quandle structure by restriction of the ambient binary operation $\qop$.

Let $(X, \qop_X)$ and $(Y, \qop_Y)$ be quandles.
A map $f: X \to Y$ is called a \emph{homomorphism} if it satisfies $f(x \qop_X x') = f(x) \qop _Y f(x')$ for any $x, x' \in X$.
A bijective homomorphism is called an \emph{isomorphism}.
Quandles are said to be \emph{isomorphic} if there exists an isomorphism between them.
An isomorphism from a quandle to itself is called an \emph{automorphism}.
By the axioms (Q2) and (Q3), the point symmetry $s_y: X \to X$ is an automorphism for any $y \in X$.

The set of automorphisms of $X$ is denoted by $\mathrm{Aut}(X)$.
We multiply automorphisms from left to right; namely, $fg := g \circ f$.
With this convention,
$\mathrm{Aut}(X)$ acts on $X$ from the right by $x^f := f(x)$, so that  $x^{fg} = (x^f)^g$.
We use the same convention for permutations.
The \emph{inner automorphism group} $\mathrm{Inn}(X)$ of $X$ is the subgroup of $\mathrm{Aut}(X)$ generated by $\{s_y \mid y \in X\}$.
The \emph{displacement group} $\Dis(X)$ of $X$ is the subgroup of $\Inn(X)$ generated by $\{s_x s_y^{-1} \mid x, y \in X\}$.
An orbit of the action of $\Inn(X)$ on $X$ is called a \emph{connected component},
and we denote the set of all connected components by $\pi_0(X)$.
A quandle is said to be \emph{connected} if the action of $\Inn(X)$ is transitive, that is, $\pi_0(X)$ consists of a single point.
A subset $B \subset X$ is called a \emph{base point set} if $B$ intersects each connected component in exactly one point.
In other words, a base point set is a complete set of representatives for the orbit space $X/\Inn(X)$.

In the rest of this section, we introduce some standard examples.

\begin{ex}\label{ex:Conj}
  Let $G$ be a group.
  We define a binary operation $\qop$ on $G$ by $g \qop h := \iota_h(g)$ for $g, h \in G$,
  where $\iota_h: G \to G$ is the inner automorphism defined by $\iota_h(g) := h^{-1}gh$.
  Then, the pair $(G, \qop)$ becomes a quandle called the \emph{conjugation quandle} of $G$, and is denoted by $\Conj(G)$.

  A subset $A \subset G$ with $\iota_a(A) = A$ for any $a \in A$ is a subquandle of $\Conj(G)$. 
  Such a subquandle is called a \emph{conjugation subquandle} and is denoted by $\Conj(A)$.
  Note that a conjugation subquandle $\Conj(A)$ of $\Conj(G)$ is a union of conjugacy classes of the subgroup generated by $A$.
\end{ex}

\begin{ex}\label{ex:GAlex}
  Let $\sigma$ be an automorphism of a group $G$.
  We define a binary operation $\qop$ on $G$ by $g \qop h := \sigma(gh^{-1}) h$ for $g, h \in G$.
  Then, the pair $(G, \qop)$ becomes a quandle called the \emph{generalized Alexander quandle}, and is denoted by $\GAlex(G, \sigma)$.
\end{ex}





\section{A quandle derived from a group action}
\label{sec3}

In this section, we construct a quandle from a group action and study its fundamental properties.

\subsection{Definition}
\label{subsec:definition}

Let $\Phi: X \times G \to X$ be a right action of a group $G$ on a set $X$,
and write $x^g := \Phi(x, g)$ for $x \in X$ and $g \in G$.
The action gives rise to a quandle structure on $X \times G$.

\begin{lem}\label{lem:ActQdleStr}
    Define the binary operation $\qop_\Phi$ on $X \times G$ by 
    \[
        (x, g) \qop_\Phi (y, h) := (x^{g^{-1}h}, h^{-1} g h) \quad \text{for }(x, g), (y,h) \in X \times G.
    \]
    Then, $(X \times G, \qop_\Phi)$ is a quandle.
\end{lem}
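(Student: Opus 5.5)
We verify the three axioms directly from the definition, using only that $\Phi$ is a right action, so that $(x^a)^b = x^{ab}$ and $x^e = x$.

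\emph{Axiom (Q1).} This is immediate:
\[
(x,g)\qop_\Phi(x,g) = (x^{g^{-1}g},\, g^{-1}gg) = (x,g).
\]

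\emph{Axiom (Q2).} For fixed $(y,h)$, the map $s_{(y,h)}(x,g) = (x^{g^{-1}h}, h^{-1}gh)$ does not depend on $y$. We exhibit an explicit inverse: given $(u,k)$, set $g := hkh^{-1}$ and $x := u^{h^{-1}g}$. Then
\[
x^{g^{-1}h} = u^{h^{-1}g g^{-1}h} = u, \qquad h^{-1}gh = k,
\]
so $s_{(y,h)}$ is surjective. Injectivity follows the same way, since $g$ is recovered from $h^{-1}gh$, and then $x$ is recovered by acting with $(g^{-1}h)^{-1}$. Concretely, the inverse is $(u,k) \mapsto (u^{k^{-1}h^{-1}\cdot}\ldots)$; it suffices to note that the second coordinate map is conjugation, hence bijective, and that for fixed $g$ the first coordinate map is the bijection $x \mapsto x^{g^{-1}h}$.

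\emph{Axiom (Q3).} This is the main computation, though it is routine. Take $(x,g)$, $(y,h)$, $(z,k)$.

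For the left side, first $(x,g)\qop(y,h) = (x^{g^{-1}h}, h^{-1}gh)$. Applying $\qop(z,k)$ gives first coordinate
\[
x^{g^{-1}h\,(h^{-1}gh)^{-1}k} = x^{g^{-1}h\,h^{-1}g^{-1}h\,k} = x^{g^{-2}hk},
\]
and second coordinate $k^{-1}h^{-1}ghk$.

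For the right side, set $(x',g') := (x^{g^{-1}k}, k^{-1}gk)$ and $(y',h') := (y^{h^{-1}k}, k^{-1}hk)$. Then $(x',g')\qop(y',h')$ has second coordinate
\[
h'^{-1}g'h' = k^{-1}h^{-1}k\,k^{-1}gk\,k^{-1}hk = k^{-1}h^{-1}ghk,
\]
which matches. Its first coordinate is
\[
x'^{g'^{-1}h'} = x^{g^{-1}k\,k^{-1}g^{-1}k\,k^{-1}hk} = x^{g^{-2}hk},
\]
which also matches.

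The only care needed is keeping the right-action convention consistent when composing exponents, so that exponents concatenate left to right. Since both coordinates agree, (Q3) holds and $(X\times G,\qop_\Phi)$ is a quandle.
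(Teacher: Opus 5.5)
Your proof is correct and takes the paper's route: you verify (Q1)--(Q3) directly, and for (Q3) you reduce both sides to $(x^{g^{-2}hk},\,k^{-1}h^{-1}ghk)$. The only blemish is the garbled ``Concretely'' sentence in (Q2); your surjectivity computation already gives the explicit inverse $(u,k)\mapsto(u^{kh^{-1}},hkh^{-1})$, which is exactly the paper's map $t_{(y,h)}$, so state that instead.
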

\begin{proof}
    For any $(x, g) \in X \times_\Phi G$, 
    we have 
    \begin{align*}
        (x, g) \qop_\Phi (x, g)
        = (x^{g^{-1} g}, g^{-1}g g)
        = (x, g),
    \end{align*}
    and hence axiom (Q1) holds.

    For $(y, h) \in X \times_\Phi G$, we define $t_{(y, h)}: X \times_\Phi G \to X \times_\Phi G$ by 
    $t_{(y, h)}(x, g) := (x^{gh^{-1}}, hgh^{-1})$ for $(x, g) \in X \times_\Phi G$.
    Then, $t_{(y,h)} = s_{(y,h)}^{-1}$. 
    Indeed, it satisfies 
    \begin{align*}
        s_{(y, h)}(t_{(y, h)}(x, g)) 
        = s_{(y, h)}(x^{gh^{-1}}, hgh^{-1})
        = (x^{gh^{-1} (hgh^{-1})^{-1} h}, h^{-1}(hgh^{-1})h)
        = (x, g),
    \end{align*}
    and
    \begin{align*}
        t_{(y, h)}(s_{(y, h)}(x, g)) 
        = t_{(y, h)}(x^{g^{-1}h}, h^{-1}gh)
        = (x^{g^{-1}h (h^{-1}gh) h^{-1}}, h(h^{-1}gh)h^{-1})
        = (x, g).
    \end{align*}
    Thus, axiom (Q2) holds.

    Let $(x, g), (y, h), (z, k) \in X \times_\Phi G$.
    Then, we have
    \begin{align*}
        ((x,g) \qop_\Phi (y,h)) \qop_\Phi (z,k)
        &= (x^{g^{-1}h}, h^{-1}gh) \qop_\Phi (z,k)\\
        &= (x^{g^{-1}h (h^{-1}gh)^{-1} k}, k^{-1}(h^{-1}gh)k)\\
        &= (x^{g^{-1}k (k^{-1}gk)^{-1} (k^{-1} h k)}, (k^{-1} h k)^{-1}(k^{-1}gk)(k^{-1} h k))\\
        &= (x^{g^{-1}k}, k^{-1}gk) \qop_\Phi (y^{h^{-1}k}, k^{-1} h k)\\
        &= ((x,g) \qop_\Phi (z,k)) \qop_\Phi ((y,h) \qop_\Phi (z,k)).
    \end{align*}
    This shows that axiom (Q3) holds, which completes the proof.
\end{proof}

\begin{dfn}\label{def:ActQdle}
    For a right action $\Phi: X \times G \to X$ of a group $G$ on a set $X$,
    we denote the quandle $(X \times G, \qop_\Phi)$ constructed in \cref{lem:ActQdleStr} by $X \times_\Phi G$.
\end{dfn}

\begin{rmk}\label{rmk:MatRack_Kink}
    Fenn and Rourke introduced an example of a rack arising from a group action, called the \emph{matrix rack} \cite[Section 7, Example 7]{Fenn-1992-RacksLinksCodimensionTwoa}.
    Andruskiewitsch and Graña \cite[\S 1.1.1]{Andruskiewitsch-2003-RacksPointedHopfAlgebrasa} described a construction that associates a quandle to a rack.
    Our construction coincides with the quandle obtained by applying this procedure to matrix racks.
\end{rmk}

\begin{rmk}\label{rmk:ProjExt}
    The projection $p_G: X \times_\Phi G \to \Conj(G)$ is a surjective quandle homomorphism. 
    In particular, since $|p^{-1}_G(g)| = |X|$ for every $g \in G$, 
    $p_G: X \times_\Phi G \to \Conj(G)$ is a quandle extension introduced in \cite{Andruskiewitsch-2003-RacksPointedHopfAlgebrasa}.
    More precisely, the dynamical cocycle 
    \[
    f: \Conj(G) \times \Conj(G) \to \mathrm{Map}(X\times X, X),\quad
    (g, h) \mapsto f_{g,h}
    \] associated with the extension $p_G: X \times_\Phi G \to \Conj(G)$ is given by $f_{g,h}(x,y) = (x^{g^{-1}h})$.
\end{rmk}

\begin{ex}\label{ex:TrivAct}
    If $\Phi: X \times G \to X$ is a trivial action of a group $G$ on a singleton $X=\{x\}$,
    then the quandle $X \times_\Phi G$ is isomorphic to the conjugation quandle $\Conj(G)$.
\end{ex}

Note that the point symmetry at $(x, g) \in X \times_\Phi G$ is equal to that at $(y, g) \in X \times_\Phi G$.
Thus, we denote $s_g := s_{(x, g)} = s_{(y, g)}$ for $x, y \in X$ and $g \in G$.
The following lemma is useful for computations.
\begin{lem}\label{lem:formula}
    For $(x_0, g_0), \dots, (x_k, g_k) \in X \times_\Phi G$ and $n_1, \dots, n_k \in \ZZ$, the following equality holds:
    \[
    (\cdots ((x_0, g_0) \qop_\Phi^{n_1} (x_1, g_1)) \qop_\Phi^{n_2} \cdots) \qop_\Phi^{n_k} (x_k, g_k) = (x_0^{\alpha}, \alpha^{-1}g_0\alpha),
    \]
    where $\alpha = g_0^{-(n_1 + \cdots + n_k)}g_1^{n_1} \cdots g_k^{n_k}$.
\end{lem}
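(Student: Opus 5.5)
The natural approach is induction on $k$, after first working out the effect of a single power $s_{g_1}^{n}$ on an arbitrary element. The main tool is the formula for $s_h$ and its inverse $t_{(y,h)}$ from the proof of \cref{lem:ActQdleStr}. Note that both depend only on $h$, consistent with writing $s_h$.

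\textbf{Step 1 (single power).} I would show that for every $n\in\ZZ$,
\[
(x,g)\qop_\Phi^{n}(y,h) = (x^{g^{-n}h^{n}},\, h^{-n}gh^{n}).
\]
For $n\ge 0$ this follows by induction on $n$. The case $n=0$ is trivial and $n=1$ is the definition. For the inductive step, apply $s_h$ to $(x^{g^{-n}h^n}, h^{-n}gh^n)$. The new first coordinate is $x$ acted on by
\[
g^{-n}h^n\,(h^{-n}g^{-1}h^n)\,h = g^{-n}g^{-1}h^{n+1} = g^{-(n+1)}h^{n+1},
\]
and the second coordinate is $h^{-(n+1)}gh^{n+1}$.

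For $n<0$, the same induction runs with $t_{(y,h)}(x,g)=(x^{gh^{-1}},hgh^{-1})$. Applied to $(x^{g^{-n}h^{n}},h^{-n}gh^{n})$, the first coordinate becomes $x$ acted on by
\[
g^{-n}h^n\,h^{-n}gh^{n}\,h^{-1} = g^{-n+1}h^{n-1},
\]
which matches the formula for $n-1$. The second coordinate is handled the same way.

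In short, the one-step formula is exactly the case $k=1$ with $\alpha=g_0^{-n_1}g_1^{n_1}$.

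\textbf{Step 2 (induction on $k$).} Assume the left side after $k-1$ steps equals $(x_0^{\beta},\beta^{-1}g_0\beta)$, where
\[
\beta=g_0^{-(n_1+\cdots+n_{k-1})}g_1^{n_1}\cdots g_{k-1}^{n_{k-1}}.
\]
Apply Step 1 with $x=x_0^\beta$, $g=\beta^{-1}g_0\beta$, $h=g_k$ and $n=n_k$. This gives first coordinate
\[
x_0^{\beta(\beta^{-1}g_0\beta)^{-n_k}g_k^{n_k}} = x_0^{\beta\beta^{-1}g_0^{-n_k}\beta g_k^{n_k}}.
\]
We then need to simplify $g_0^{-n_k}\beta$. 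Every factor of $\beta$ except the $g_i^{n_i}$ on the right is a power of $g_0$, so $g_0^{-n_k}$ commutes past $g_0^{-(n_1+\cdots+n_{k-1})}$. This gives the exponent
\[
g_0^{-(n_1+\cdots+n_k)}g_1^{n_1}\cdots g_k^{n_k}=\alpha.
\]
The second coordinate is $g_k^{-n_k}\beta^{-1}g_0\beta g_k^{n_k}$. It equals $\alpha^{-1}g_0\alpha$ because the extra $g_0^{-n_k}$ in $\alpha$ commutes with $g_0$.

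\textbf{Main obstacle.} The only delicate point is the bookkeeping in Step 2. The raw exponent comes out as $\beta(\beta^{-1}g_0\beta)^{-n_k}g_k^{n_k}$, and one must notice that it collapses to $g_0^{-n_k}\beta g_k^{n_k}$. That expression equals $\alpha$ only because the leading factor of $\beta$ is a power of $g_0$. The right-action convention, $x^{ab}=(x^a)^b$, also has to be respected throughout.
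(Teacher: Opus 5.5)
Your proposal is correct and follows essentially the same route as the paper: the case $k=1$ by induction on $|n_1|$, using $s_{(y,h)}$ and its inverse $t_{(y,h)}$ from the proof of \cref{lem:ActQdleStr}, and then induction on $k$. Your explicit computations fill in the details the paper leaves to the reader, in particular the observation that $\beta(\beta^{-1}g_0\beta)^{-n_k}g_k^{n_k}=g_0^{-n_k}\beta g_k^{n_k}$, where the factor $g_0^{-n_k}$ merges with the leading power of $g_0$ in $\beta$.
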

\begin{proof}
For $k=1$, the formula follows by induction on $|n_1|$, 
using the formula for the inverse point symmetry in \cref{lem:ActQdleStr}.
The general case then follows by induction on $k$.
\end{proof}

In the rest of this subsection, we establish the functoriality of our construction.
First, we recall morphisms between right actions.
Let $\Phi: X \times G \to X$ and $\Psi: Y \times H \to Y$ be right actions.
For a map $f: X \to Y$ and a homomorphism $\rho: G \to H$ of groups,
a \emph{morphism} $\Phi \to \Psi$ is a pair $(f, \rho)$ with $f(x^g) = f(x)^{\rho(g)}$ for any $x \in X$ and $g \in G$.
A morphism $(f, \rho)$ is called an \emph{isomorphism} if the map $f: X \to Y$ is bijective, and the homomorphism $\rho: G \to H$ is an isomorphism.

\begin{prop}\label{prop:EquivariantHom}
    Let $\Phi: X \times G \to X$ and $\Psi: Y \times H \to Y$ be right actions.
    For a map $f: X \to Y$ and a homomorphism $\rho: G \to H$ of groups,
    if $(f, \rho)$ is a morphism $\Phi \to \Psi$, then the product $f \times \rho: X \times_\Phi G \to Y \times_\Psi H$ is a homomorphism.
\end{prop}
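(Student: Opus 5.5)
The plan is a direct computation from the definition of $\qop_\Phi$ in \cref{lem:ActQdleStr}, using only the equivariance $f(x^g)=f(x)^{\rho(g)}$ and the fact that $\rho$ is a group homomorphism. Fix $(x,g),(y,h)\in X\times_\Phi G$. I will evaluate both sides of the homomorphism condition
\[
(f\times\rho)\bigl((x,g)\qop_\Phi(y,h)\bigr) = (f\times\rho)(x,g)\qop_\Psi (f\times\rho)(y,h)
\]
and compare the two components separately.

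For the left-hand side, the definition of $\qop_\Phi$ gives $(f\times\rho)(x^{g^{-1}h},h^{-1}gh) = \bigl(f(x^{g^{-1}h}),\rho(h^{-1}gh)\bigr)$. Equivariance of $f$ turns the first component into $f(x)^{\rho(g^{-1}h)}$. Since $\rho$ is a homomorphism, this equals $f(x)^{\rho(g)^{-1}\rho(h)}$, and the second component becomes $\rho(h)^{-1}\rho(g)\rho(h)$.

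For the right-hand side, $(f(x),\rho(g))\qop_\Psi(f(y),\rho(h))$ is by definition $\bigl(f(x)^{\rho(g)^{-1}\rho(h)},\rho(h)^{-1}\rho(g)\rho(h)\bigr)$. This agrees with the left-hand side componentwise, so $f\times\rho$ is a homomorphism.

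There is no real obstacle here. The only point needing care is that equivariance is applied to the single element $g^{-1}h\in G$ rather than in two steps. Even a stepwise application would be fine, since $\Psi$ is a right action, so $f(x)^{\rho(g)^{-1}\rho(h)} = (f(x)^{\rho(g)^{-1}})^{\rho(h)}$. Note that injectivity of $f$ and $\rho$ is not needed, and the second coordinate is simply the statement that $\rho$ induces a homomorphism $\Conj(G)\to\Conj(H)$, compatible with \cref{rmk:ProjExt}.
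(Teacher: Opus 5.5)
Your proposal is correct and follows the same route as the paper's proof: a direct componentwise computation that applies the equivariance of $f$ to $g^{-1}h$ and uses that $\rho$ is a group homomorphism. Both components then match the definition of $\qop_\Psi$.
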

\begin{proof}
    For $(x, g), (y, h) \in X \times_\Phi G$, we have 
    \begin{align*}
        f \times \rho((x, g) \qop_\Phi (y, h)) 
        &= f \times \rho(x^{g^{-1}h}, h^{-1}gh)
        = (f(x^{g^{-1}h}), \rho(h^{-1}gh))\\
        &= (f(x)^{\rho(g)^{-1}\rho(h)}, \rho(h)^{-1}\rho(g)\rho(h))
        = (f(x), \rho(g)) \qop_\Psi (f(y), \rho(h)),
    \end{align*}
    as desired.
\end{proof}

\begin{rmk}\label{rmk:FuctorFromGset}
    Let $\RGSet$ be the category of right actions and morphisms.
    Our construction defines a functor from $\RGSet$ to $\Qdl$ by $(\Phi: X \times G \to X)\mapsto X \times_\Phi G$, where $\Qdl$ is the category of quandles and homomorphisms of quandles.
\end{rmk}

\subsection{Inner automorphism group}
\label{subsec:Innerauto}
We now turn to the internal structure of $X\times_\Phi G$, beginning with its inner automorphism group.
To describe the group, we introduce a special automorphism $\zeta_n: X \times_\Phi G \to X \times_\Phi G$.

\begin{lem}\label{lem:zeta_n}
    For $n \in \ZZ$, define the map $\zeta_n: X \times_\Phi G \to X \times_\Phi G$ by $\zeta_n(x, g) := (x^{g^n}, g)$ for $(x, g) \in X \times_\Phi G$.
    Then, the following hold:
    \begin{enumerate}
        \item[$(1)$] The map $\zeta_n$ is an automorphism of $X \times_\Phi G$.
        \item[$(2)$] The order of $\zeta_1$ is equal to the exponent of the permutation group induced by the action of $G$ on $X$, where the exponent is allowed to be infinite.
        \item[$(3)$] For any $g \in G$, we have $\zeta_n s_g = s_g \zeta_n$.
        \item[$(4)$] $s_{gh} = \zeta_{1} s_g s_h$ and $s_g^{-1} = s_{g^{-1}} \zeta_2$ for any $g, h \in G$. 
    \end{enumerate}
\end{lem}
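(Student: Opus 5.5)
The plan is to verify each item by direct computation from the definition of $\qop_\Phi$ in \cref{lem:ActQdleStr}, the formula for $s_{(y,h)}^{-1}$ given there, and the left-to-right composition convention $fg = g\circ f$. The one structural observation I will use repeatedly is that $g$ commutes with every power $g^n$. It gives $(h^{-1}gh)^n = h^{-1}g^nh$ and lets the exponents $g^n$ and $g^{-1}$ be swapped.

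For (1), I first note that $\zeta_m\zeta_n = \zeta_{m+n}$, since $\zeta_n$ does not change the group coordinate, and that $\zeta_0=\mathrm{id}$. Hence each $\zeta_n$ is bijective with inverse $\zeta_{-n}$. For the homomorphism property I compute the first coordinate of $\zeta_n((x,g)\qop_\Phi(y,h))$:
\[
x^{g^{-1}h\,(h^{-1}gh)^n} = x^{g^{-1}g^nh}.
\]
The first coordinate of $\zeta_n(x,g)\qop_\Phi\zeta_n(y,h)$ is $x^{g^ng^{-1}h}$. These agree because $g^{-1}g^n=g^ng^{-1}$, and both second coordinates equal $h^{-1}gh$. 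For (2), $\zeta_1^m=\zeta_m$, and $\zeta_m=\mathrm{id}$ holds exactly when $x^{g^m}=x$ for all $x\in X$ and $g\in G$. This says $m$ is a multiple of the exponent of the image of $G$ in the symmetric group on $X$, which gives the claim, including the infinite case where no such $m\neq 0$ exists.

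For (3), I evaluate both composites at $(x,k)$. The composite $\zeta_n s_g$ (apply $\zeta_n$ first) gives first coordinate $x^{k^nk^{-1}g}$. The composite $s_g\zeta_n$ gives $x^{k^{-1}g(g^{-1}kg)^n}=x^{k^{-1}k^ng}$. These are equal, and both second coordinates are $g^{-1}kg$. For (4), applying $s_g$ and then $s_h$ to $(x,k)$ gives $(x^{k^{-2}gh},(gh)^{-1}k(gh))$. Precomposing with $\zeta_1$ replaces $x$ by $x^k$, which yields $(x^{k^{-1}gh},\dots)=s_{gh}(x,k)$. Similarly, $s_{g^{-1}}\zeta_2$ (apply $\zeta_2$ first) sends $(x,k)$ to $(x^{k^2k^{-1}g^{-1}},gkg^{-1})=(x^{kg^{-1}},gkg^{-1})$. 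This is the inverse $t_{(y,g)}(x,k)$ computed in \cref{lem:ActQdleStr}.

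There is no genuine obstacle here. The only delicate point is bookkeeping: keeping the order convention for products of automorphisms straight in (3) and (4), and tracking where exponent products appear, since the action is on the right. I would double-check (4) against the convention $fg=g\circ f$ before finalizing.
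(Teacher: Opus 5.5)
Your proposal is correct and follows the paper's proof essentially line by line. Items (1), (3) and (4) are direct coordinate computations, $\zeta_m\zeta_n=\zeta_{m+n}$ gives both bijectivity and the order in (2), and $s_g^{-1}$ is identified with the map $t_{(y,g)}$ from \cref{lem:ActQdleStr}.

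There is one harmless slip in the parenthetical in (4). Under the convention $fg=g\circ f$, the product $s_{g^{-1}}\zeta_2$ means applying $s_{g^{-1}}$ first, so what you computed is actually $\zeta_2 s_{g^{-1}}$. The identity still follows, because these two products agree by (3); computing in the correct order also gives $(x^{kg^{-1}},gkg^{-1})$.
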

\begin{proof}
    First, we show $(1)$.
    For $(x, g), (y, h) \in X \times_\Phi G$, we have
    \begin{align*}
        \zeta_n((x, g) \qop_\Phi (y, h)) 
        &= \zeta_n(x^{g^{-1}h}, h^{-1}gh) 
        = (x^{g^{-1}h (h^{-1}gh)^n}, h^{-1}gh) 
        = (x^{g^{n-1}h}, h^{-1}gh)
        = (x^{g^n}, g) \qop_\Phi (y^{h^n}, h)\\
        &= \zeta_n(x, g) \qop_\Phi \zeta_n(y, h).
    \end{align*}
    Thus, the map is a homomorphism.
    Since the inverse of $\zeta_n$ is equal to $\zeta_{-n}$, the map $\zeta_n$ is bijective.
    Thus, $\zeta_n$ is an isomorphism, as desired.

    Since $\zeta_1^N = \zeta_N$ for $N \in \ZZ$, we have $\zeta_1^N = \mathrm{id}$ if and only if $x^{g^N} = x$ for any $x \in X$ and $g \in G$.
    This shows $(2)$.

    Next, we show $(3)$.
    For any $(y, h) \in X \times_\Phi G$, we have
    \begin{align*}
        (y, h)^{\zeta_n s_g} 
        &= (y^{h^n}, h)^{s_g} 
        = ((y^{h^n})^{h^{-1}g}, g^{-1}hg)
        = (y^{h^{n-1}g}, g^{-1}hg)
        = ((y^{h^{-1}g})^{(g^{-1}hg)^n}, g^{-1}hg)\\
        &= (y^{h^{-1}g}, g^{-1}hg)^{\zeta_n}
        = (y, h)^{s_g \zeta_n}.
    \end{align*}
    Hence, we obtain $\zeta_n s_g = s_g \zeta_n$.

    Finally, we show $(4)$.
    For any $(z, a) \in X \times_\Phi G$, we have
    \begin{align*}
        (z, a)^{s_{gh}} 
        &= (z^{a^{-1}gh}, (gh)^{-1}agh)
        = ((z^{g})^{(g^{-1}ag)^{-1}h}, h^{-1}g^{-1}a gh)\\
        &= (z^g, g^{-1} a g)^{s_h}
        = ((z^{a})^{a^{-1}g},g^{-1} a g)^{s_h}
        = (z^a, a)^{s_g, s_h}
        = (z, a)^{\zeta_1 s_g s_h},
    \end{align*}
    and hence $s_{gh} = \zeta_1 s_g s_h$.
    In addition, we have
    \begin{align*}
        (z, a)^{s_g^{-1}}
        &= (z^{ag^{-1}}, g a g^{-1})
        = ((z^{a^{-1} g^{-1}})^{(gag^{-1})^{2}})
        = (z^{a^{-1} g^{-1}}, g a g^{-1})^{\zeta_2}
        = (z, a)^{s_{g^{-1}} \zeta_2}
    \end{align*}
    Thus, we obtain $s_g^{-1} = s_{g^{-1}} \zeta_2$.
    This completes the proof.
\end{proof}

By \cref{lem:zeta_n} $(4)$, we have the following corollary.

\begin{cor}\label{cor:InnerAutomorphismGroup1}
    We have $\Inn(X \times_\Phi G) = \{\zeta_n s_g \mid n \in \ZZ, g \in G\}$.
\end{cor}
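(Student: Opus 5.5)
We prove the two inclusions separately, using only \cref{lem:zeta_n}.

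For the inclusion $\{\zeta_n s_g \mid n \in \ZZ, g \in G\} \subseteq \Inn(X \times_\Phi G)$, each $s_g$ is inner by definition. It therefore suffices to show that every $\zeta_n$ lies in $\Inn(X \times_\Phi G)$. Taking $g = h = e$ in \cref{lem:zeta_n} $(4)$ gives $s_e = \zeta_1 s_e s_e$, so $\zeta_1 = s_e^{-1}$. This is inner, and hence so is $\zeta_n = \zeta_1^n$, using $\zeta_1^N = \zeta_N$ from the proof of \cref{lem:zeta_n} $(2)$. As a sanity check, $s_e(x,a) = (x^{a^{-1}}, a)$, so indeed $s_e = \zeta_{-1}$.

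For the reverse inclusion, let $S = \{\zeta_n s_g \mid n \in \ZZ, g \in G\}$. Since $\Inn(X \times_\Phi G)$ is generated by the $s_g$, it suffices to show that $S$ is a subgroup containing every $s_g$.

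\begin{itemize}
\item $S$ contains each generator, since $s_g = \zeta_0 s_g$, and it contains the identity, since $\mathrm{id} = \zeta_0 s_e \zeta_1 = \zeta_1 s_e$ (by $(3)$ and $\zeta_1 = s_e^{-1}$).
\item $S$ is closed under products. By \cref{lem:zeta_n} $(3)$ the $\zeta$'s commute with every $s_g$, and by $(4)$ we have $s_g s_h = \zeta_{-1} s_{gh}$. Hence $\zeta_n s_g \zeta_m s_h = \zeta_{n+m} s_g s_h = \zeta_{n+m-1} s_{gh} \in S$.
\item $S$ is closed under inverses. Using $(3)$ and $(4)$ again, $(\zeta_n s_g)^{-1} = s_g^{-1}\zeta_{-n} = s_{g^{-1}}\zeta_2\zeta_{-n} = \zeta_{2-n} s_{g^{-1}} \in S$.
\end{itemize}

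Thus $S$ is a subgroup containing all generators, so $\Inn(X \times_\Phi G) \subseteq S$, and equality follows.

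The only point needing care is the bookkeeping of the right-action convention ($fg = g \circ f$) when applying $(4)$. Since each $\zeta_m$ is central relative to the $s_g$'s, the order of factors only matters among the $s$'s, and $(4)$ handles that directly. I expect no real obstacle.
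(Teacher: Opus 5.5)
Your proof is correct and is essentially the paper's argument. The paper simply cites \cref{lem:zeta_n}~(4); you have written out the closure computations it leaves implicit, including the use of (3) to commute each $\zeta_m$ past the $s_g$ and the observation $\zeta_1 = s_e^{-1}$, which shows that every $\zeta_n$ is inner.
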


It is easy to show that $s_{h^{-1}gh} = s_{h}^{-1} s_g s_h$ for any $g, h \in G$.
Thus, the map $s: \Conj(G) \to \Conj(\Inn(X \times_\Phi G))$ is a quandle homomorphism.
However, by \cref{lem:zeta_n} (4), it need not be a group homomorphism.

We define the subgroup $Z$ of $\Inn(X \times_\Phi G)$ by $Z := \{\zeta_n \mid n \in \ZZ\}$.
By \cref{lem:zeta_n}, this subgroup $Z$ lies in the center of $\Inn(X \times_\Phi G)$.

\begin{prop}\label{prop:InnerAutomorphismGroup}
    Let $\Phi: X \times G \to X$ be a right action,
    let $Z(G)$ be the center of $G$,
    and 
    let $K$ be the kernel of $\Phi$, that is,
    \[
        Z(G) = \{z \in G \mid zg = gz \text{ for any $g \in G$}\}, \quad
        K = \{g \in G \mid x^g = x \text{ for any $x \in X$}\}.
    \]
    Then, the map $F: \Inn(X \times_\Phi G) \to G/(Z(G) \cap K)$ defined by $F(\zeta_n s_g) := [g]$ for $n \in \ZZ$ and $g \in G$ is a surjective group homomorphism, and $\ker(F) = Z$.
\end{prop}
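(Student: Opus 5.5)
The main difficulty is that $F$ is defined on expressions $\zeta_n s_g$, and such an expression need not be unique. So I will first pin down exactly when two such expressions give the same automorphism. Then I will check multiplicativity using \cref{lem:zeta_n}, and finally read off surjectivity and the kernel.

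\emph{Step 1 (the equality criterion).} I will show that $\zeta_n s_g = \zeta_m s_h$ holds if and only if $h^{-1}g \in Z(G)\cap K$ and $x^{g^{n}} = x^{g^{m}}$ for all $x\in X$ and all $g\in G$ (the last condition quantified over the first coordinate's group element). Evaluating both sides on $(x,a)$ gives
\[
(x^{a^{n}a^{-1}g}, g^{-1}ag) \quad\text{and}\quad (x^{a^{m}a^{-1}h}, h^{-1}ah).
\]
Comparing second coordinates for all $a$ gives $gh^{-1}\in Z(G)$. Taking $a=1$ in the first coordinates gives $x^{g}=x^{h}$ for all $x$, so $gh^{-1}\in K$. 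Since $Z(G)\cap K$ is normal, this is the same as $[g]=[h]$.

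I only need the forward implication: equality of the automorphisms forces $[g]=[h]$. This alone makes $F$ well defined. The $a=1$ specialization is the key trick, and checking it carefully is the main obstacle I expect. I also need to confirm that no $\zeta$-factor spoils the argument; it does not, because $\zeta_n$ acts trivially on $(x,1)$.

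\emph{Step 2 (homomorphism).} By \cref{lem:zeta_n}(3), $Z$ is central. Using (4),
\[
(\zeta_n s_g)(\zeta_m s_h) = \zeta_{n+m} s_g s_h = \zeta_{n+m-1} s_{gh},
\]
so $F$ sends the product to $[gh]=[g][h]$. By \cref{cor:InnerAutomorphismGroup1}, every element of $\Inn(X\times_\Phi G)$ has the form $\zeta_n s_g$, so $F$ is defined on all of $\Inn(X\times_\Phi G)$ and is a homomorphism.

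\emph{Step 3 (surjectivity).} This is clear, since $F(s_g)=[g]$.

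\emph{Step 4 (kernel).} First, $Z\subseteq\ker F$. Indeed, $s_1(x,a)=(x^{a^{-1}},a)=\zeta_{-1}(x,a)$, so $\zeta_n=\zeta_{n+1}s_1$ and $F(\zeta_n)=[1]$.

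Conversely, suppose $F(\zeta_n s_g)=1$, so $g=c\in Z(G)\cap K$. Then
\[
s_c(x,a)=(x^{a^{-1}c}, c^{-1}ac)=(x^{a^{-1}},a),
\]
using that $c$ is central and acts trivially on $X$. Hence $s_c=\zeta_{-1}$, and therefore $\zeta_n s_g=\zeta_{n-1}\in Z$. This gives $\ker F = Z$.
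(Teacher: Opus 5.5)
Your proof is correct and follows the paper's argument essentially step for step. Well-definedness comes from comparing second coordinates to get $gh^{-1}\in Z(G)$ and setting $a=1$ in the first coordinate to get $gh^{-1}\in K$; multiplicativity comes from parts (3) and (4) of \cref{lem:zeta_n}; and the kernel follows from $s_c=\zeta_{-1}$ for $c\in Z(G)\cap K$.
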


\begin{proof}
    First, we show that the map $F: \Inn(X \times_\Phi G) \to G/(Z(G) \cap K)$ is well-defined.
    To show this, we prove that $\zeta_m s_{g_1} = \zeta_n s_{g_2}$ implies $g_1g_2^{-1} \in Z(G) \cap K$ for $g_1, g_2 \in G$ and $m, n \in \ZZ$.
    Then, for any $(y, h) \in X \times_\Phi G$, we have 
    \[
        (y^{h^{m-1} g_1}, g_1^{-1}hg_1) 
        = (y, h)^{\zeta_m s_{g_1}} 
        = (y, h)^{\zeta_n s_{g_2}} = (y^{h^{n-1} g_2}, g_2^{-1}hg_2).
    \]
    From the second component, we obtain $g_1g_2^{-1} h = h g_1g_2^{-1}$ for any $h \in G$, and hence $g_1g_2^{-1} \in Z(G)$.
    From the first component, taking $h = 1$, we also obtain $y^{g_1} =  y^{g_2}$ for any $y \in X$.
    Thus, we have $g_1g_2^{-1} \in K$.
    Hence, we have $g_1g_2^{-1} \in Z(G) \cap K$.
    Therefore, we conclude that the map $F: \Inn(X \times_\Phi G) \to G/(Z(G) \cap K)$ is well-defined.

    It is clear that the map $F$ is surjective.
    By \cref{lem:zeta_n} $(4)$, the map $F$ is a group homomorphism.
    Finally, we show that $\ker(F) = Z$.
    The inclusion $Z \subset \ker(F)$ is clear.
    Conversely, let $\zeta_n s_g \in \ker(F)$.
    Then, $g \in Z(G) \cap K$, and we have
    \[
        (y,h)^{s_g} = (y^{h^{-1}g}, g^{-1}hg) = (y^{h^{-1}}, h) = (y, h)^{\zeta_{-1}}.
    \]
    Therefore, we obtain $\zeta_n s_g = \zeta_{n-1} \in Z$.
    This completes the proof.
\end{proof}

\begin{cor}\label{cor:InnerAutomorphismGroup2}
    The group $\Inn(X \times_\Phi G)/Z$ is isomorphic to the group $G/(Z(G) \cap K)$.
    In other words, we have the following short exact sequence:
    \begin{equation*}
        1 \to Z \to \Inn(X \times_\Phi G) \to G/(Z(G) \cap K) \to 1.
    \end{equation*}
    Moreover, this short exact sequence splits, and we have $\Inn(X \times_\Phi G) \cong Z \times G/(Z(G) \cap K)$.
\end{cor}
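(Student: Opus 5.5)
The plan is to split the short exact sequence by writing down an explicit section $G/(Z(G)\cap K)\to \Inn(X\times_\Phi G)$ that is a group homomorphism. The naive choice $g\mapsto s_g$ fails because of the correction term in \cref{lem:zeta_n} $(4)$. I will instead correct it by a power of $\zeta_1$ chosen so that this term cancels.

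\textbf{Defining the section.} For $g\in G$ set $\tau_g:=\zeta_1 s_g\in\Inn(X\times_\Phi G)$. Computing directly,
\[
(y,h)^{\tau_g}=(y^{h},h)^{s_g}=(y^{h h^{-1}g},g^{-1}hg)=(y^{g},g^{-1}hg).
\]
So $\tau_g$ is simply the diagonal action of $g$.

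\textbf{$\tau$ is a homomorphism.} This follows either from the formula above or from \cref{lem:zeta_n}: since $\zeta_1$ is central and $s_gs_h=\zeta_{-1}s_{gh}$ by $(4)$,
\[
\tau_g\tau_h=\zeta_2 s_gs_h=\zeta_1 s_{gh}=\tau_{gh}.
\]
Hence $g\mapsto\tau_g$ is a group homomorphism $G\to\Inn(X\times_\Phi G)$.

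\textbf{Its kernel.} By the explicit formula, $\tau_g=\mathrm{id}$ if and only if $y^g=y$ for all $y$ and $g^{-1}hg=h$ for all $h$. This says exactly that $g\in Z(G)\cap K$. So $\tau$ descends to an injective homomorphism $\bar\tau\colon G/(Z(G)\cap K)\to\Inn(X\times_\Phi G)$.

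\textbf{$\bar\tau$ is a section of $F$.} We have $F(\bar\tau[g])=F(\zeta_1 s_g)=[g]$, so $F\circ\bar\tau=\mathrm{id}$. This is the splitting of the sequence in \cref{prop:InnerAutomorphismGroup}.

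\textbf{The direct product.} Because $Z$ is central (\cref{lem:zeta_n} $(3)$), the splitting gives a direct product rather than only a semidirect one. Concretely, consider the map
\[
Z\times G/(Z(G)\cap K)\to\Inn(X\times_\Phi G),\qquad (\zeta_n,[g])\mapsto \zeta_n\tau_g.
\]
\begin{enumerate}
\item It is a homomorphism, since $Z$ is central and $\bar\tau$ is a homomorphism.
\item It is surjective, because $\zeta_n s_g=\zeta_{n-1}\tau_g$ and \cref{cor:InnerAutomorphismGroup1} lists all elements of $\Inn(X\times_\Phi G)$ in the form $\zeta_n s_g$.
\item It is injective. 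If $\zeta_n\tau_g=\mathrm{id}$, applying $F$ gives $[g]=1$. Then $\tau_g=\mathrm{id}$, and hence $\zeta_n=\mathrm{id}$.
\end{enumerate}

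The only step that needs any idea is finding the correct correction $\zeta_1 s_g$ rather than, say, $\zeta_{-1}s_g$. Everything else is routine bookkeeping with \cref{lem:zeta_n} and \cref{prop:InnerAutomorphismGroup}.
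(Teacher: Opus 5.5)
Your proposal is correct and follows the paper's proof: the paper uses the same section $g\mapsto \zeta_1 s_g$. It likewise computes $(y,h)^{\zeta_1 s_g}=(y^g,g^{-1}hg)$ to identify the kernel as $Z(G)\cap K$, and uses the centrality of $Z$ to turn the split extension into a direct product. Your explicit check that $(\zeta_n,[g])\mapsto \zeta_n\zeta_1 s_g$ is a bijective homomorphism simply spells out the final step, which the paper states in one line.
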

\begin{proof}
The isomorphism and the existence of the short exact sequence immediately follow from \cref{prop:InnerAutomorphismGroup}.
Define the map $i: G \to \Inn(X \times_\Phi G)$ by $i(g) := \zeta_1 s_g$.
Then, this map is a group homomorphism.
Indeed, for $g, h \in G$, 
by \cref{lem:zeta_n} we have 
   \begin{align*}
       i(gh) = \zeta_1s_{gh} = \zeta_1^2 s_{g} s_h = \zeta_1 s_g \zeta_1 s_h = i(g)i(h).
   \end{align*}
For any $g \in G$, we have 
\[
    (y, h)^{i(g)} = (y^g, g^{-1}hg).
\]
This shows that $\ker(i) = Z(G) \cap K$.
Thus, the map $i$ induces the injective group homomorphism $\bar{i}: G/(Z(G) \cap K) \to \Inn(X \times_\Phi G)$, and this map is a section of the map $F: \Inn(X \times_\Phi G) \to G/(Z(G) \cap K)$ defined in \cref{prop:InnerAutomorphismGroup}.
This shows the short exact sequence splits.
Since $Z$ lies in the center of $\Inn(X \times_\Phi G)$, we have $\Inn(X \times_\Phi G) \cong Z \rtimes G/(Z(G) \cap K) \cong Z \times G/(Z(G) \cap K)$.
This completes the proof.
\end{proof}

\subsection{Connected components}
\label{subsec:conn_comp}
Having determined the inner automorphism group, we next describe its orbits on $X\times_\Phi G$, that is, the connected components.
Let $\Psi: (X \times G) \times G \to X \times G$ be the diagonal action defined by $\Psi((x, g), h) := (x^h, h^{-1} g h)$.
We denote the orbit space of the diagonal action $\Psi: (X \times G) \times G \to X \times G$ by $(X \times G )/G$.

\begin{prop}\label{prop:conn_comp}
  The following hold:
  \begin{enumerate}
      \item[$(1)$] Elements $(x, g), (b, a) \in X \times_\Phi G$ belong to the same connected component if and only if there exists an element $h \in G$ such that $(x, g) = \Psi((b, a), h)$.

      \item[$(2)$] There exists a bijection $\pi_0(X \times_{\Phi} G) \to (X \times G )/G$.
  \end{enumerate}
\end{prop}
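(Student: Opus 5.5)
The plan is to use \cref{cor:InnerAutomorphismGroup1}, which says that every element of $\Inn(X \times_\Phi G)$ has the form $\zeta_n s_g$ with $n \in \ZZ$ and $g \in G$. Hence the connected component of $(b,a)$ is the set $\{(b,a)^{\zeta_n s_g} \mid n\in\ZZ,\ g \in G\}$. We compute this set directly and compare it with the $\Psi$-orbit of $(b,a)$.

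For (1), the computation in the proof of \cref{prop:InnerAutomorphismGroup} gives
\[
(b,a)^{\zeta_n s_g} = (b^{a^{n-1}g}, g^{-1}ag).
\]
Put $h := a^{n-1}g$. Then $h^{-1}ah = g^{-1}a^{-(n-1)}\,a\,a^{n-1}g = g^{-1}ag$, since $a$ commutes with its own powers. Therefore
\[
(b,a)^{\zeta_n s_g} = (b^h, h^{-1}ah) = \Psi((b,a),h).
\]
So every element of the component of $(b,a)$ lies in the $\Psi$-orbit of $(b,a)$.

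Conversely, take any $h \in G$ and choose $n=1$, $g=h$. Then $(b,a)^{\zeta_1 s_h} = (b^h, h^{-1}ah) = \Psi((b,a),h)$. This is exactly the map $i(h)$ from the proof of \cref{cor:InnerAutomorphismGroup2}, which already records that $(y,k)^{i(h)} = (y^h, h^{-1}kh)$. Hence the whole $\Psi$-orbit lies in the connected component, and the component of $(b,a)$ equals its $\Psi$-orbit. This proves (1). One routine check remains: the order convention in $\zeta_n s_g$ (apply $\zeta_n$ first) must be read correctly. With the right-action convention $x^{fg}=(x^f)^g$, the formula above is the one already verified.

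For (2), part (1) shows that the partition of $X\times G$ into $\Inn$-orbits coincides with the partition into $\Psi$-orbits. So the map sending the component containing $(x,g)$ to the $\Psi$-orbit of $(x,g)$ is well defined, and it is a bijection $\pi_0(X\times_\Phi G)\to (X\times G)/G$. There is no real obstacle here; the only subtle point is the absorption $a^{n-1}g = h$ in (1), which is what makes the $\zeta_n$ factor invisible at the level of orbits.
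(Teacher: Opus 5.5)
Your proof is correct and is essentially the paper's argument. The reverse inclusion is identical: the paper applies $s_h\zeta_1$, which equals your $\zeta_1 s_h$ because $\zeta_1$ is central. The forward inclusion differs only cosmetically: you reduce to the normal form $\zeta_n s_g$ of \cref{cor:InnerAutomorphismGroup1} and absorb the power of $a$ into $h=a^{n-1}g$, while the paper writes an inner automorphism as a word $s_{g_1}^{\varepsilon_1}\cdots s_{g_n}^{\varepsilon_n}$ and applies \cref{lem:formula} to get $(b^\alpha,\alpha^{-1}a\alpha)$ directly, with part (2) then following in the same way.
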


\begin{proof}
    We show $(1)$.
    First, we assume that $(x,g), (b,a) \in X \times_\Phi G$ belong to the same connected component.
    Then, there exist $g_1, \dots g_n \in G$ and $\varepsilon_1, \dots, \varepsilon_n \in \{\pm 1\}$ such that $(x, g) = (b, a)^{s_{g_1}^{\varepsilon_1}\cdots s_{g_n}^{\varepsilon_n}}$.
    Applying \cref{lem:formula}, we obtain $(x, g) = (b^\alpha, \alpha^{-1} a \alpha)$, where $\alpha := a^{-(\varepsilon_1+ \cdots + \varepsilon_n)} g_1^{\varepsilon_1} \cdots g_n^{\varepsilon_n} \in G$.
    This proves the necessity.
    Conversely, suppose that $(x, g) = (b^h, h^{-1}ah)$ for $h \in G$.
    Then, it satisfies \[
    (b, a)^{s_h \zeta_1} = (b^{a^{-1}h}, h^{-1} a h)^{\zeta_1} = (b^h, h^{-1} a h) = (x, g).
    \]
    Since the automorphism $\zeta_1$ of $X \times_\Phi G$ is contained in $\Inn(X \times_\Phi G)$ by \cref{lem:zeta_n} $(4)$,
    we conclude that $(x, g)$ and $(b,a)$ belong to the same connected component,
    as desired.

    Since the assertion $(1)$ shows that each connected component of $X \times_\Phi G$ is equal to an orbit of the action $\Psi$,
    we immediately obtain $(2)$.
    This completes the proof.
\end{proof}

Finally, we give a complete set of representatives for the orbit space $(X \times G)/G$.
Let $R \subset G$ be a complete set of representatives for the conjugacy classes of $G$, that is, 
\[
G = \sqcup_{r \in R} r^G, \quad \text{where } r^G = \{g^{-1}rg \mid g \in G\}.
\]
For each $g \in G$, we choose $r_g \in R$ and $u_g \in G$ which satisfy $g = u_g^{-1} r_g u_g$.
Note that for $g \in G$, the element $r_g$ is uniquely determined, but $u_g$ need not be.
We denote the centralizer of $g$ in $G$ by $C_G(g) := \{h \in G \mid gh = hg\}$.
Let $B$ be a complete set of representatives for the orbit space $X/G$ of $\Phi: X \times G \to X$. 
The orbit decomposition gives 
\[
    X \cong \sqcup_{b \in B} H_b \backslash G, 
    \quad 
    \text{where $H_b$ is the stabilizer at $b$.}
\]
For each $x \in X$, we choose $b_x \in B$ and $v_x \in G$ such that $x = b_x^{v_x}$.
Note that for $x \in X$, the element $b_x$ is uniquely determined, but $v_x$ need not be.

An orbit of the diagonal action determines an orbit of $X$ and a conjugacy class of $G$. 
After fixing representatives $b \in B$ and $r\in R$, the remaining ambiguity is given by the stabilizer $H_b$ and the centralizer $C_G(r)$, which naturally leads to a double-coset description as follows.

\begin{prop}\label{prop:representative}
    The map $\varphi: (X \times G)/G \to \sqcup_{(b, r) \in B \times R} C_G(r) \backslash G / H_b$ defined by $\varphi([x, g]) = C_G(r_g) u_g v_x^{-1} H_{b_x}$ is bijective.
\end{prop}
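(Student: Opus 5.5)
We will show that $\varphi$ is well defined and then construct an explicit inverse. The diagonal action is $(x,g)\cdot h=(x^h,h^{-1}gh)$, and the target of $\varphi$ uses the double cosets $C_G(r)\,u\,H_b$.

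\textbf{Well-definedness.} The value $\varphi([x,g])$ must not depend on the choices of $u_g$ and $v_x$, nor on the representative of the orbit.

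First, the choices. If $g=u^{-1}r_gu=u'^{-1}r_gu'$, then $u'u^{-1}\in C_G(r_g)$, so $u'\in C_G(r_g)u$. Similarly, if $x=b_x^{v}=b_x^{v'}$, then $v'v^{-1}\in H_{b_x}$. Hence $v'^{-1}\in v^{-1}H_{b_x}$, using the right-action convention $b^{v'}=b^{hv}$ for $h\in H_b$. Consequently $C_G(r_g)u'v'^{-1}H_{b_x}=C_G(r_g)uv^{-1}H_{b_x}$.

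Second, the representative. Replace $(x,g)$ by $(x^h,h^{-1}gh)$. We have $r_{h^{-1}gh}=r_g$ and $b_{x^h}=b_x$, since conjugacy classes and $G$-orbits are preserved. We may choose $u_{h^{-1}gh}=u_gh$ and $v_{x^h}=v_xh$, because $x^h=b_x^{v_xh}$. Then
\[
u_{h^{-1}gh}\,v_{x^h}^{-1}=u_gh\,h^{-1}v_x^{-1}=u_gv_x^{-1},
\]
so the double coset is unchanged. This step, keeping the left/right conventions straight, is where care is needed. The main point to verify is that stabilizers multiply on the left, i.e. $H_bv$ is the fibre over $b^v$.

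\textbf{Inverse map.} Define $\psi(C_G(r)\,w\,H_b):=[(b^{w^{-1}},r)]$, where $\psi$ lands in the orbit space. (An equivalent choice is $[(b,\,w r w^{-1})]$ after translating by $w$.)

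We check that $\psi$ is well defined. Replace $w$ by $cwk$ with $c\in C_G(r)$ and $k\in H_b$. Then $b^{k^{-1}w^{-1}c^{-1}}=b^{w^{-1}c^{-1}}$, since $k^{-1}\in H_b$. Acting on $(b^{w^{-1}},r)$ by $c^{-1}$ gives
\[
(b^{w^{-1}c^{-1}},\,c\,r\,c^{-1})=(b^{w^{-1}c^{-1}},r),
\]
so both pairs lie in the same orbit.

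\textbf{The two composites.} We compute both.

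$\psi\circ\varphi$: we have $\psi\varphi([x,g])=[(b_x^{v_xu_g^{-1}},r_g)]$. Acting by $u_g$ gives $(b_x^{v_x},u_g^{-1}r_gu_g)=(x,g)$, so $\psi\varphi$ is the identity.

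$\varphi\circ\psi$: for $(x,g)=(b^{w^{-1}},r)$ we may take $r_g=r$, $u_g=1$, $b_x=b$ and $v_x=w^{-1}$. This yields $C_G(r)\,w\,H_b$, so $\varphi\psi$ is the identity.

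Hence $\varphi$ is a bijection.
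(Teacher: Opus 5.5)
Your proof is correct, and it follows the paper's argument with a different packaging.

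The paper proves well-definedness in one computation, handling arbitrary choices of $u_g,u_h,v_x,v_y$ and a translating element $\alpha$ together. It then proves surjectivity by showing that $[(b,w^{-1}rw)]$ maps to $C_G(r)wH_b$. Finally it proves injectivity by building $\gamma=u_g^{-1}\alpha u_h=v_x^{-1}\beta v_y$ from an equality of double cosets.

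You first show the choices of $u,v$ do not matter. That lets you take $u_{h^{-1}gh}=u_gh$ and $v_{x^h}=v_xh$ for the translated pair. You then replace the surjectivity and injectivity steps with the explicit inverse $\psi(C_G(r)wH_b)=[(b^{w^{-1}},r)]$.

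The two routes line up as follows. Your check that $\psi$ is well defined, a single conjugation by $c^{-1}$ with $c\in C_G(r)$, carries the content of the paper's injectivity step. Your computation $\varphi\circ\psi=\mathrm{id}$ is the paper's surjectivity computation, since $\Psi((b^{w^{-1}},r),w)=(b,w^{-1}rw)$ is exactly the paper's preimage. Your version is a bit cleaner and makes the inverse explicit; the paper's injectivity argument has the small advantage of writing down the translating element $\gamma$ directly.

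There is one slip, harmless because you never use it. Your parenthetical "equivalent choice" should be $[(b,w^{-1}rw)]$, not $[(b,wrw^{-1})]$. Writing $wrw^{-1}=(w^{-1})^{-1}r\,w^{-1}$ gives $u=w^{-1}$, so $\varphi([(b,wrw^{-1})])=C_G(r)w^{-1}H_b$. That generally differs from $C_G(r)wH_b$: take $G=\SS_3$ acting on itself by right multiplication, $r=(1\,2)$ and $w=(1\,2\,3)$.
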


\begin{proof}
    First, we show that the map $\varphi$ is well-defined.
    Suppose $[x, g] = [y, h]$ in $(X \times G)/G$.
    Then, there exists $\alpha \in G$ such that $(y, h) = (x^\alpha, \alpha^{-1} g \alpha)$.
    This shows that $x$ and $y$ lie in the same orbit of the action $\Phi$, and $g$ and $h$ are conjugate in the group $G$.
    Thus, we have $b_x = b_y$ and $r_g = r_h$.
    Let $s := v_x \alpha v_y^{-1}$ and $c := u_g \alpha u_h^{-1}$.
    Then $s \in H_{b_x}$ and $c \in C_G(r_g)$. 
    Indeed,
    \begin{align*}
        b_x^s = b_x^{v_x \alpha v_y^{-1}} = x^{\alpha v_y^{-1}} = y^{v_y^{-1}} = b_y = b_x,
    \end{align*}
    and 
    \begin{align*}
        c r_g 
        = u_g \alpha u_h^{-1} r_h 
        = u_g \alpha h u_h^{-1} 
        = u_g g \alpha u_h^{-1}
        = r_g u_g \alpha u_h^{-1}
        = r_g c.
    \end{align*}
    Thus, we obtain
    \begin{align*}
        u_h v_y^{-1} 
        = (c^{-1} u_g \alpha) (\alpha^{-1} v_x^{-1} s) 
        = c^{-1} u_g  v_x^{-1} s
        \in C_G(r_g) u_g v_x^{-1} H_{b_x}. 
    \end{align*}
    This shows $\varphi([x, g]) = \varphi([y, h])$,
    and hence the map $\varphi$ is well-defined.

    Next, we show that the map $\varphi$ is surjective.
    Let $(b, r) \in B \times R$ and $g \in G$.
    Let $y := b$ and $h := g^{-1} rg$.
    Then, we have $b_y = b$, $v_y \in H_b$, $r_h = r$, and $u_h g^{-1} \in C_G(r)$.
    Thus, we obtain
    \begin{align*}
        \varphi([y, h]) 
        = C_G(r_h) u_h v_y^{-1} H_y
        = C_G(r) (u_h g^{-1})^{-1} u_h v_y^{-1} v_y H_b
        = C_G(r) g H_b.
    \end{align*}
    This shows the map $\varphi$ is surjective.

    Finally, we show that $\varphi$ is injective.
    Assume $\varphi([x, g]) = \varphi([y, h])$, that is,
    we have $C_G(r_g)u_g v_x^{-1} H_{b_x} = C_G(r_h)u_h  v_y^{-1} H_{b_y}$ and $(b_x, r_g) = (b_y, r_h)$.
    Thus, there exist $\alpha \in C_G(r_g)$ and $\beta \in H_{b_x}$ such that $u_h v_y^{-1} = \alpha^{-1} u_g v_x^{-1} \beta$, 
    and hence $\gamma := u_g^{-1} \alpha u_h = v_x^{-1} \beta v_y$.
    Then, we have
    \begin{align*}
        (y, h) 
        &= (b_y^{v_y}, u_h^{-1} r_h u_h)
        = (b_x^{\beta v_y}, u_h^{-1} \alpha^{-1} r_g \alpha u_h)\\
        &= (x^{v_x^{-1} \beta v_y}, u_h^{-1} \alpha^{-1} u_g g u_g^{-1} \alpha u_h)
        = (x^{\gamma}, \gamma^{-1} g \gamma)
        = \Psi((x, g), \gamma).
    \end{align*}
    This shows that $[x, g] = [y, h]$ in $(X \times G)/G$, and we conclude that the map $\varphi$ is injective.
    This completes the proof. 
\end{proof}

We conclude the following theorem.

\begin{thm}\label{thm:pi_0}
   Let $\Phi: X \times G \to X$ be a right action.
   Let $B$ be a complete set of representatives for the orbit space $X/G$, 
   and let $R$ be a complete set of representatives for the conjugacy classes of $G$.
   \begin{enumerate}
    \item There exists a bijection between the set $\pi_0(X \times_\Phi G)$ of connected components and the disjoint union $\sqcup_{(b, r) \in B \times R} C_G(r) \backslash G / H_b$, where $C_G(r)$ is the centralizer of $r \in R$ in $G$, and $H_b$ is the stabilizer subgroup at $b \in B$ of the action $\Phi$.

    \item The connected component containing $(x, g) \in X \times_\Phi G$ corresponds to $C_G(r_g) u_g v_x^{-1} H_{b_x}$, where elements $r_g \in R$, $b_x \in B$, $u_g, v_x \in G$ are chosen so that
   \[
    g = u_g^{-1} r_g u_g, \quad x = b_x^{v_x}.
   \]
   \end{enumerate}
\end{thm}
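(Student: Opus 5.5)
The theorem is obtained by composing the two bijections already established. By \cref{prop:conn_comp} $(2)$, there is a bijection $\pi_0(X \times_\Phi G) \to (X \times G)/G$. By \cref{prop:representative}, there is a bijection $\varphi\colon (X \times G)/G \to \sqcup_{(b,r) \in B \times R} C_G(r)\backslash G/H_b$. Their composite is therefore a bijection from $\pi_0(X\times_\Phi G)$ onto the disjoint union of double coset spaces, which proves assertion (1).

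For assertion (2), I would trace the element $(x,g)$ through both maps. By \cref{prop:conn_comp} $(1)$, the connected component of $(x,g)$ is exactly the $\Psi$-orbit of $(x,g)$. The bijection of \cref{prop:conn_comp} $(2)$ therefore sends this component to the class $[x,g] \in (X\times G)/G$. By the definition of $\varphi$, this class maps to $C_G(r_g)\,u_g v_x^{-1}\,H_{b_x}$, where $g = u_g^{-1} r_g u_g$ and $x = b_x^{v_x}$. The value does not depend on the choices of $u_g$ and $v_x$, because the well-definedness of $\varphi$ was verified in \cref{prop:representative}, and that verification covers exactly these choices.

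There is no real obstacle here, since all the computational work was done in \cref{prop:conn_comp} and \cref{prop:representative}. The only point that needs care is that the bijection in \cref{prop:conn_comp} $(2)$ really is the identity on orbits: each connected component literally equals a $\Psi$-orbit, so the correspondence in (2) is given by the explicit formula above rather than by some abstract bijection.
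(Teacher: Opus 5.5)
Your proposal is correct and matches the paper, which states this theorem as an immediate consequence of composing the bijection $\pi_0(X\times_\Phi G)\to (X\times G)/G$ from \cref{prop:conn_comp} with the bijection $\varphi$ of \cref{prop:representative}, and reads off assertion (2) from the formula for $\varphi$. You also note that the result does not depend on the choices of $u_g$ and $v_x$, because the well-definedness computation uses only the equalities $g=u_g^{-1}r_gu_g$ and $x=b_x^{v_x}$; this is correct and worth stating.
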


\subsection{Subquandles}
\label{subsec:subqdle}
After describing the connected components, we next study another basic structural aspect of $X\times_\Phi G$, namely its subquandles.
In particular, we show that any subquandle of $X \times_\Phi G$ can be described in terms of a subgroup $\Gamma$ of $G$ and a subset $S$ of $X \times G$.
First, we introduce some notation.
We define the subset $C_\Gamma(S) \subset \Gamma$ by 
\[
    C_\Gamma(S) := \{g^{-1} a g \mid g \in \Gamma, a \in p_G(S)\},
\]
where $p_G: X \times G \to G$ is the natural projection.
Note that the second coordinates of elements obtained from $S$ by quandle operations lie in the conjugacy closure $C_\Gamma(S)$.
Let $D_S^\Gamma$ be the subgroup of $G$ generated by the subset $\{gh^{-1} \in G \mid g,h \in C_\Gamma(S)\}$.
Since $C_\Gamma(S)$ is invariant under conjugation by $\Gamma$, the subgroup $D_S^\Gamma$ is normal in $\Gamma$.
The subgroup $D^\Gamma_S$ is inspired by the displacement group of a quandle (cf. \cite{Hulpke-2016-ConnectedQuandlesTransitiveGroupsa}).
This subgroup records the products of these elements whose total exponent is zero.
\begin{lem}[cf. {\cite[Proposition 2.1]{Hulpke-2016-ConnectedQuandlesTransitiveGroupsa}}]\label{lem:D_S}
    The following equality holds:
    \[
    D_S^\Gamma = \left\{\alpha_1^{n_1} \cdots \alpha_k^{n_k} \in \Gamma \mid k \geq 0, n_i \in \{\pm 1\}, \alpha_i \in C_\Gamma(S), \sum_{i=1}^{k} n_i = 0\right\}.
    \]
\end{lem}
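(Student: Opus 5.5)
We prove the two inclusions separately. Write $W$ for the set on the right-hand side, i.e. all products $\alpha_1^{n_1}\cdots\alpha_k^{n_k}$ with $\alpha_i \in C_\Gamma(S)$, $n_i \in \{\pm 1\}$ and total exponent $\sum n_i = 0$. The empty product ($k=0$) is allowed, so $1 \in W$. We will show $W$ is a subgroup containing the generators of $D_S^\Gamma$, and that every element of $W$ is a product of those generators.

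For $D_S^\Gamma \subset W$ we argue in three steps.
\begin{itemize}
\item Each generator $gh^{-1}$ with $g,h \in C_\Gamma(S)$ lies in $W$: take $k=2$, $\alpha_1=g$, $\alpha_2=h$, $n_1=1$, $n_2=-1$.
\item $W$ is closed under multiplication, since concatenating two words adds their exponent sums, giving $0+0=0$.
\item $W$ is closed under inverses, since $(\alpha_1^{n_1}\cdots\alpha_k^{n_k})^{-1}=\alpha_k^{-n_k}\cdots\alpha_1^{-n_1}$ has exponent sum $-0=0$.
\end{itemize}
Hence $W$ is a subgroup of $\Gamma$ (indeed $C_\Gamma(S)\subset\Gamma$) containing all the generators, so $D_S^\Gamma\subset W$.

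For $W \subset D_S^\Gamma$, we show by induction on $k$ that any word with zero exponent sum lies in $D_S^\Gamma$. Since $\sum n_i = 0$, $k$ is even; for $k=0$ the word is trivial. For $k\ge 2$ there is an adjacent pair with opposite signs, i.e. an index $i$ with $n_i=-n_{i+1}$; otherwise all signs would agree and the sum could not vanish. Write the word as $u\,\alpha_i^{n_i}\alpha_{i+1}^{n_{i+1}}\,v$. Then
\[
u\,\alpha_i^{n_i}\alpha_{i+1}^{n_{i+1}}\,v \;=\; \bigl(u\,\alpha_i^{n_i}\alpha_{i+1}^{n_{i+1}}\,u^{-1}\bigr)\,(uv).
\]
The word $uv$ has length $k-2$ and exponent sum $0$, so it lies in $D_S^\Gamma$ by induction. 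The middle factor $\alpha_i^{n_i}\alpha_{i+1}^{n_{i+1}}$ is either $\alpha_i\alpha_{i+1}^{-1}$, which is a generator, or $\alpha_i^{-1}\alpha_{i+1}=(\alpha_{i+1}^{-1}\alpha_i)^{-1}$.

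The main obstacle is handling both the element $\alpha^{-1}\beta$ and the conjugation by $u\in\Gamma$, since neither is obviously a product of generators. We use the following observations.
\begin{itemize}
\item By the remark before the lemma, $C_\Gamma(S)$ is invariant under conjugation by $\Gamma$. So for $\gamma\in\Gamma$ and a generator, $\gamma(gh^{-1})\gamma^{-1}=(\gamma g\gamma^{-1})(\gamma h\gamma^{-1})^{-1}$ is again a generator, and $D_S^\Gamma$ is normal in $\Gamma$.
\item We have $\alpha^{-1}\beta=\alpha^{-1}(\beta\alpha^{-1})\alpha$, a $\Gamma$-conjugate of the generator $\beta\alpha^{-1}$. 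Hence $\alpha^{-1}\beta\in D_S^\Gamma$.
\end{itemize}
Therefore the middle factor lies in $D_S^\Gamma$, and by normality so does its conjugate by $u\in\Gamma$. The product of the two factors is then in $D_S^\Gamma$, which completes the induction and gives $W = D_S^\Gamma$.
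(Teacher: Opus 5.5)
Your proof is correct. It differs from the paper's only in how the inductive step for $W\subset D_S^\Gamma$ is organized.

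The paper splits into two cases according to the outer signs:
\begin{itemize}
\item If $n_1=n_k$, the partial exponent sums vanish at some $1<j<k$. The word then factors as a product of two shorter zero-sum words, and induction applies to each.
\item If $n_1\neq n_k$, it peels off the outer letters and writes $d=\delta\,(\delta^{-1}\alpha_1^{n_1}\delta\,\alpha_k^{n_k})$, where $\delta$ is the inner zero-sum word. Since $\delta^{-1}\alpha_1\delta\in C_\Gamma(S)$, the bracket is directly of the form $c_1c_2^{-1}$ or $c_1^{-1}c_2=(c_1^{-1}c_2c_1)c_1^{-1}$, i.e.\ a generator.
\end{itemize}

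You instead cancel an adjacent opposite-sign pair anywhere in the word. You then absorb the leftover conjugation by the arbitrary prefix $u\in\Gamma$ using normality of $D_S^\Gamma$ in $\Gamma$, which you correctly derive from the $\Gamma$-invariance of $C_\Gamma(S)$.

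What each buys:
\begin{itemize}
\item Your version has a single uniform step and needs no partial-sum (discrete intermediate value) argument. The cost is invoking normality of $D_S^\Gamma$, which the paper also records just before the lemma.
\item The paper's version only ever conjugates elements of $C_\Gamma(S)$, and it exhibits the pieces directly as generators.
\end{itemize}

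Your proof of $D_S^\Gamma\subset W$, showing $W$ is a subgroup of $\Gamma$ containing the generators, just makes explicit what the paper states in one line.
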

\begin{proof}
    Since $(gh^{-1})^{\pm 1}$ for $g, h \in C_\Gamma(S)$ is in the right-hand side,
    the inclusion $\subset$ holds.
    Let $d = \alpha_1^{n_1} \cdots \alpha_k^{n_k}$ be an element in the right hand side, that is, $\alpha_i \in C_\Gamma(S)$ and $n_i \in \{\pm 1\}$ with $\sum_i n_i = 0$.
    We prove the converse inclusion by induction on $k$.
    If $k = 0$, then $d = 1 \in D^\Gamma_S$, so the assertion holds.
    The case $k=1$ cannot occur because $\sum_{i} n_i = 0$.
    If $k = 2$, the element $d$ is equal to $\alpha_1 \alpha_2^{-1} \in D_S^\Gamma$ or $\alpha_1^{-1} \alpha_2 = (\alpha_1^{-1} \alpha_2 \alpha_1) \alpha_1^{-1} \in D_S^\Gamma$.

    We now consider the case $k \geq 3$.
    First, we assume $n_1 = n_k$.
    Then, there exists $1 < j < k$ such that $\sum_{i = 1}^{j} n_i = 0$.
    By the induction hypothesis, the elements $d_1 := \alpha_1^{n_1} \cdots \alpha_j^{n_j}$ and $d_2 := \alpha_{j+1}^{n_{j+1}} \cdots \alpha_k^{n_k}$ are in $D_S^\Gamma$.
    Thus, we have $d = d_1d_2 \in D_S^\Gamma$.

    Next, we assume $n_1 \neq n_k$.
    Then, the element $\delta := \alpha_2^{n_2} \cdots \alpha_{k-1}^{n_{k-1}}$ is in $D_S^\Gamma$ by the induction hypothesis.
    Since $\delta \in D^\Gamma_S$, the conjugate $\delta^{-1} \alpha_1 \delta$ belongs to $C_\Gamma(S)$.
    Since $n_1 \neq n_k$, the product 
    $\delta^{-1} \alpha_1^{n_1} \delta \alpha_k^{n_k}$ is of the form $c_1 c_2^{-1}$ or $c_1^{-1}c_2$ with $c_1, c_2 \in C_\Gamma(S)$, and hence belongs to $D_S^{\Gamma}$.
    As in the case $k = 2$, we obtain $(\delta^{-1} \alpha_1^{n_1} \delta) \alpha_k^{n_k} \in D_S^\Gamma$.
    Therefore,
    we have $d = \alpha_1^{n_1} \delta \alpha_k^{n_k} = \delta (\delta^{-1} \alpha_1^{n_1} \delta \alpha_k^{n_k}) \in D_S^\Gamma$.
    This completes the proof.
\end{proof}

\begin{dfn}
    Let $\Phi: X \times G \to X$ be a right action.
    For a subgroup $\Gamma$ of $G$ and a non-empty subset $S$ of $X \times \Gamma$, we define the subset $Q_\Phi(\Gamma; S)$ by
    \[
        Q_\Phi(\Gamma; S) := \{(b^d, d^{-1}ad) \mid (b, a) \in S, d \in D_S^\Gamma\}.
    \]
\end{dfn}

\begin{lem}\label{lem:Subquanlde_ActQdle}
    Let $\Phi: X \times G \to X$ be a right action.
    For a subgroup $\Gamma$ of $G$ and a non-empty subset $S$ of $X \times \Gamma$, the subset $Q_\Phi(\Gamma; S)$ of $X \times G$ is a subquandle of $X \times_\Phi G$.
\end{lem}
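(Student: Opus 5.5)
To show that $Q_\Phi(\Gamma; S)$ is a subquandle, I will verify directly that $s_q(Q) = Q$ for every $q \in Q := Q_\Phi(\Gamma;S)$. Since $s_q$ is a bijection, it suffices to prove $s_q(Q)\subset Q$ and $s_q^{-1}(Q)\subset Q$. Nonemptiness is clear, because $S \subset Q$ (take $d=1$).

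Take two elements $(b^d, d^{-1}ad)$ and $(c^e, e^{-1}a'e)$ of $Q$, with $(b,a),(c,a')\in S$ and $d,e\in D_S^\Gamma$. Put $g := d^{-1}ad$ and $h := e^{-1}a'e$. Both $g$ and $h$ lie in $C_\Gamma(S)$: indeed $a,a' \in p_G(S) \subset \Gamma$, and $D_S^\Gamma \subset \Gamma$, since it is generated by products of elements of $C_\Gamma(S)\subset\Gamma$.

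By the defining formula of $\qop_\Phi$,
\[
(b^d, g)\qop_\Phi (c^e,h) = (b^{d g^{-1} h}, h^{-1}g h).
\]
Set $d' := d g^{-1} h$. Then $d'^{-1} a d' = h^{-1} g^{-1} d^{-1} a d g^{-1} h = h^{-1} g h$, because $d^{-1}ad = g$ commutes with $g^{-1}$. So the result has the form $(b^{d'}, d'^{-1}ad')$. It remains to check that $d'\in D_S^\Gamma$. This holds since $g^{-1}h$ is a generator-type element: by \cref{lem:D_S}, it is a product $\alpha_1^{-1}\alpha_2$ of elements of $C_\Gamma(S)$ with total exponent zero. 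As $D_S^\Gamma$ is a subgroup, $d' = d\,(g^{-1}h) \in D_S^\Gamma$.

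For the inverse, use the formula $t_{(y,h)}(x,g) = (x^{gh^{-1}}, hgh^{-1})$ from \cref{lem:ActQdleStr}. The analogous computation gives $(b^{d''}, d''^{-1} a d'')$ with $d'' := d g h^{-1}$. The membership $gh^{-1}\in D_S^\Gamma$ is immediate from the definition, and the conjugation identity $d''^{-1}ad'' = h g h^{-1}$ follows in the same way.

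The only delicate point is bookkeeping, namely checking that the second coordinate transforms consistently with the new exponent $d'$. The key observation that makes this work is that $g$ commutes with itself, so the extra factor $g^{-1}$ is absorbed. Alternatively, one could appeal to \cref{lem:formula} for general iterated operations, but the one-step check above suffices.
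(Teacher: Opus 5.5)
Your proof is correct and follows the same route as the paper. You write the product as $(b^{d'}, d'^{-1}ad')$ with $d' = dg^{-1}h$, use \cref{lem:D_S} to get $g^{-1}h\in D_S^\Gamma$, and handle the inverse point symmetry the same way; you are in fact more explicit than the paper in checking $g,h\in C_\Gamma(S)$ via $D_S^\Gamma\subset\Gamma$.

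One small slip: $d'^{-1} = h^{-1}g\,d^{-1}$, not $h^{-1}g^{-1}d^{-1}$. The middle expression should therefore read $h^{-1}g\,d^{-1}ad\,g^{-1}h = h^{-1}g\,g\,g^{-1}h$; as written it would evaluate to $h^{-1}g^{-1}h$.
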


\begin{proof}
    Let $(x_1, g_1), (x_2, g_2) \in Q_\Phi(\Gamma;S)$.
    Then, there exist $(b_1, a_1), (b_2, a_2) \in S$, and $d_1, d_2 \in D_S^\Gamma$ such that $(x_1, g_1) = (b_1^{d_1}, d_1^{-1} a_1 d_1)$ and $(x_2, g_2) = (b_2^{d_2}, d_2^{-1} a_2 d_2)$.
    Then, we have 
    \begin{align*}
        (x_1, g_1) \qop_\Phi (x_2, g_2) = (x_1^{g_1^{-1}g_2}, g_2^{-1}g_1g_2) = (b_1^{d_1g_1^{-1}g_2}, (d_1g_1^{-1}g_2)^{-1}a_1(d_1g_1^{-1}g_2)).
    \end{align*}
    Since $d_1g_1^{-1}g_2 \in D_S^\Gamma$ by \cref{lem:D_S}, we obtain $(x_1, g_1) \qop_\Phi (x_2, g_2) \in Q_\Phi(\Gamma; S)$.
    Similarly, we can show $(x_1, g_1) \qop_\Phi^{-1} (x_2, g_2) \in Q_\Phi(\Gamma; S)$.
    This completes the proof.
\end{proof}

\begin{prop}
    Let $\Phi: X \times G \to X$ be a right action.
    Then, 
    any subquandle $Q$ of $X \times_\Phi G$ is equal to $Q_\Phi(\langle p_G(Q) \rangle; Q)$.
\end{prop}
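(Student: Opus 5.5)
The plan is to prove the two inclusions $Q \subset Q_\Phi(\Gamma; Q)$ and $Q_\Phi(\Gamma; Q) \subset Q$, where $\Gamma := \langle p_G(Q) \rangle$. Note first that $Q \subset X \times \Gamma$, so $Q_\Phi(\Gamma;Q)$ is defined. The first inclusion is immediate: take $d = 1 \in D_Q^\Gamma$ in the definition of $Q_\Phi(\Gamma; Q)$.

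For the reverse inclusion, the key preliminary step is to show that $C_\Gamma(Q) = p_G(Q)$. In other words, the set of second coordinates of $Q$ is already closed under conjugation by $\Gamma$. The inclusion $p_G(Q) \subset C_\Gamma(Q)$ is trivial. For the converse, fix $(y, h) \in Q$ and $\gamma \in \Gamma$, and write $\gamma = a_1^{\varepsilon_1} \cdots a_m^{\varepsilon_m}$ with $a_i \in p_G(Q)$ and $\varepsilon_i \in \{\pm 1\}$. Choose $(x_i, a_i) \in Q$. Since $Q$ is a subquandle, it is preserved by each $s_{a_i}^{\pm 1}$, so
\[
(\cdots((y,h) \qop_\Phi^{\varepsilon_1} (x_1,a_1)) \cdots ) \qop_\Phi^{\varepsilon_m} (x_m, a_m) \in Q.
\]
By \cref{lem:formula}, the second coordinate of this element is $\alpha^{-1} h \alpha$, where $\alpha = h^{-\sum \varepsilon_i}\gamma$. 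Because $h^{-\sum\varepsilon_i}$ commutes with $h$, this equals $\gamma^{-1} h \gamma$. Hence $\gamma^{-1}h\gamma \in p_G(Q)$, as required.

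Now take $(b,a) \in Q$ and $d \in D_Q^\Gamma$. By \cref{lem:D_S}, we can write $d = \alpha_1^{n_1} \cdots \alpha_k^{n_k}$ with $\alpha_i \in C_\Gamma(Q)$, $n_i \in \{\pm1\}$ and $\sum n_i = 0$. By the previous step, each $\alpha_i$ lies in $p_G(Q)$, so we may choose $x_i \in X$ with $(x_i, \alpha_i) \in Q$. Since $Q$ is a subquandle, the iterated product
\[
(\cdots((b,a) \qop_\Phi^{n_1} (x_1,\alpha_1)) \cdots) \qop_\Phi^{n_k} (x_k,\alpha_k)
\]
lies in $Q$. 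By \cref{lem:formula}, it equals $(b^{\alpha}, \alpha^{-1} a \alpha)$, where $\alpha = a^{-(n_1+\cdots+n_k)} \alpha_1^{n_1}\cdots\alpha_k^{n_k}$. Since $\sum n_i = 0$, we have $\alpha = d$. Therefore $(b^d, d^{-1}ad) \in Q$, which proves $Q_\Phi(\Gamma;Q) \subset Q$.

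The main point requiring care is the identification $C_\Gamma(Q) = p_G(Q)$. The rest is a direct application of \cref{lem:D_S} and \cref{lem:formula}, with the zero-exponent condition cancelling the $a$-power in $\alpha$.
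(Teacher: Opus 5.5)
Your proof is correct and follows essentially the same route as the paper: first establish $C_\Gamma(Q) = p_G(Q)$ via \cref{lem:formula}, then use \cref{lem:D_S} to write $d$ as a word in elements of $p_G(Q)$ with total exponent zero, and realize $(b^d, d^{-1}ad)$ as an iterated product inside $Q$. You are slightly more explicit than the paper about two points: why the factor $h^{-\sum\varepsilon_i}$ drops out of the conjugation, and why the vanishing exponent sum gives $\alpha = d$.
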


\begin{proof}
    Set $S:=Q$.
    Let $\Gamma$ be the subgroup of $G$ generated by $p_G(Q)$.
    First, we show $C_\Gamma(S) = p_G(Q)$.
    The inclusion $\supset$ is immediate from the definition.
    To prove the inclusion $\subset$, let $\alpha \in C_\Gamma(S)$.
    Thus, there exist $a \in p_G(S)$ and $g \in \Gamma$ such that $\alpha = g^{-1} a g$.
    By the definition of $\Gamma$ and $S$, there exist $(x_0, a), (x_1, a_1), \dots, (x_n, a_n) \in Q$ and $\varepsilon_i \in \{\pm 1\}$ such that $g = a_1^{\varepsilon_1} \cdots a_n^{\varepsilon_n}$.
    By \cref{lem:formula}, we have 
    \[
        Q \ni ( \cdots ((x_0, a) \qop_\Phi^{\varepsilon_1} (x_1, a_1)) \qop_\Phi^{\varepsilon_2} \cdots )\qop_\Phi^{\varepsilon_n} (x_n, a_n)
        = (x_0^{a^{-(\varepsilon_1 + \cdots + \varepsilon_n)} g, g^{-1} a g}).
    \]
    Thus, we obtain $\alpha = g^{-1} a g \in p_G(Q)$, and hence $C_\Gamma(S) \subset p_G(Q)$.

    We now show that $Q = Q_\Phi(\Gamma; S)$.
    The inclusion $Q \subset Q_\Phi(\Gamma; S)$ is immediate from the definition.
    It remains to prove the reverse inclusion $\supset$.
    Let $(x, g) \in Q_\Phi(\Gamma; S)$.
    Thus, there exist $d \in D^\Gamma_S$ and $(b, a) \in S$ such that $(x, g) = (b^{d}, d^{-1} a d)$.
    By \cref{lem:D_S}, there exist $\alpha_1, \dots, \alpha_m \in C_\Gamma(S)$ and $\varepsilon_i \in \{\pm 1\}$ for $i = 1, \dots, m$ with $\sum_{i=1}^m \varepsilon_i = 0$ such that $d = \alpha_1^{\varepsilon_1} \cdots \alpha_m^{\varepsilon_m}$.
    Since $C_\Gamma(S) = p_G(Q)$, for every $i$, there exists $x_i \in X$ such that $(x_i, \alpha_i) \in Q$.
    Then, we have
    \[
        (x, g) = ( \cdots ((b, a) \qop_\Phi^{\varepsilon_1} (x_1, \alpha_1)) \qop_\Phi^{\varepsilon_2} \cdots )\qop_\Phi^{\varepsilon_m} (x_m, \alpha_m) \in Q,
    \]
    which completes the proof.
\end{proof}

We now consider the case where $S$ is a singleton.

\begin{prop}\label{prop:SpecialClass}
    Let $\Phi: X \times G \to X$ be a right action.
    Let $\Gamma$ be a subgroup of $G$, and let $S := \{(b, a)\} \subset X \times \Gamma$.
    Then, the following hold:
    \begin{enumerate}
        \item[$(1)$] If the action $\Phi$ is trivial, then $Q_\Phi(\Gamma; S)$ is isomorphic to $\Conj(a^{D_S^\Gamma})$, where $a^{D_S^\Gamma}$ is defined by 
        \[
          a^{D_S^\Gamma} = \{g^{-1}ag \mid g \in D_S^\Gamma\}.
        \]

        \item[$(2)$] If the action $\Phi$ is free, then $Q_\Phi(\Gamma; S)$ is isomorphic to $\GAlex(D_{S}^\Gamma, \iota_a)$, where $\iota_a: D_{S}^\Gamma \to D_{S}^\Gamma$ is defined by $\iota_a(g) := a^{-1}ga$.
    \end{enumerate}
\end{prop}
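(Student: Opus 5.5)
For a singleton $S=\{(b,a)\}$ the set $Q_\Phi(\Gamma;S)$ is $\{(b^d,d^{-1}ad)\mid d\in D\}$, where I abbreviate $D:=D_S^\Gamma$. The operation on two such elements is given by the computation in the proof of \cref{lem:Subquanlde_ActQdle}. Write $(x_1,g_1)=(b^{d_1},d_1^{-1}ad_1)$ and $(x_2,g_2)=(b^{d_2},d_2^{-1}ad_2)$. Then
\[
(x_1,g_1)\qop_\Phi(x_2,g_2)=\bigl(b^{e},e^{-1}ae\bigr),\qquad e:=d_1g_1^{-1}g_2=a^{-1}d_1d_2^{-1}ad_2 .
\]
Note that $e\in D$ by \cref{lem:D_S}. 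Both parts will follow by choosing a parametrization of $Q_\Phi(\Gamma;S)$ and transporting this formula.

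For $(1)$, the action is trivial, so $b^d=b$ for all $d$. Hence the projection $p_G$ restricted to $Q_\Phi(\Gamma;S)$ is injective, and its image is exactly $a^{D}$. By \cref{rmk:ProjExt}, $p_G$ is a quandle homomorphism into $\Conj(G)$. Therefore $p_G$ gives an isomorphism onto its image, which is automatically a subquandle, namely $\Conj(a^D)$. I will also check directly that $a^D$ is closed under conjugation by its own elements. This holds because $D$ is normal in $\Gamma$ and $a\in\Gamma$, so $(d_2^{-1}ad_2)^{-1}d_1^{-1}ad_1(d_2^{-1}ad_2)=e^{-1}ae$ with $e\in D$. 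This part is immediate.

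For $(2)$, I define $\psi\colon D\to Q_\Phi(\Gamma;S)$ by $\psi(d):=(b^d,d^{-1}ad)$. It is surjective by definition. It is injective because $\Phi$ is free, so $b^{d}=b^{d'}$ forces $d=d'$. It remains to show that $\psi$ is a homomorphism from $\GAlex(D,\iota_a)$. First I need $\iota_a$ to preserve $D$; this again uses $a\in\Gamma$ and $D\trianglelefteq\Gamma$, and similarly for $a^{-1}$. In $\GAlex(D,\iota_a)$ we have
\[
d_1\qop d_2=\iota_a(d_1d_2^{-1})d_2=a^{-1}d_1d_2^{-1}a\,d_2,
\]
which is exactly the element $e$ computed above. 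Hence $\psi(d_1\qop d_2)=(b^e,e^{-1}ae)=\psi(d_1)\qop_\Phi\psi(d_2)$. A bijective homomorphism is an isomorphism, so this proves $(2)$.

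The only delicate points are bookkeeping. First, the convention $\iota_a(g)=a^{-1}ga$ must match the formula for $e$; I will verify the group identity $d_1(d_1^{-1}a^{-1}d_1)(d_2^{-1}ad_2)=a^{-1}d_1d_2^{-1}ad_2$ carefully. Second, one needs $D\subset\Gamma$ is stable under $\iota_a$, which requires $a\in\Gamma$; this holds since $S\subset X\times\Gamma$.
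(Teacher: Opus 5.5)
Your proposal is correct and follows essentially the same route as the paper. In (1) you restrict the projection $p_G$, which is injective because the first coordinate is constantly $b$. In (2) you use the bijection $d\mapsto(b^d,d^{-1}ad)$ from $\GAlex(D_S^\Gamma,\iota_a)$, and your identification $\iota_a(d_1d_2^{-1})d_2=d_1g_1^{-1}g_2$ is just a tidier packaging of the paper's direct computation of the homomorphism property.
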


\begin{proof}
To show $(1)$, we prove that the restriction of the projection $p_G: X \times_\Phi G \to G$ is an isomorphism $f := p_G|_{Q_\Phi(\Gamma; S)}: Q_\Phi(\Gamma; S) \to \Conj(a^{D_S^\Gamma})$.
Since the map $p_G: X \times_\Phi G \to G$ is a homomorphism, the map $f$ is also a homomorphism.
The injectivity of the map $p_G$ follows from the triviality of the action $\Phi$.
By definition, $p_G(Q_\Phi(\Gamma; S)) = \Conj(a^{D_S^\Gamma})$.
Therefore, the map $f: Q_\Phi(\Gamma; S) \to \Conj(a^{D^\Gamma_S})$ is an isomorphism, as desired.

Next, we show $(2)$.
Note that $D_S^\Gamma$ is a normal subgroup of $\Gamma$.
Thus, the map $\iota_a$ restricts to an automorphism of $D_S^\Gamma$.
Define the map $h: \GAlex(D_{S}^\Gamma, \iota_a) \to Q_\Phi(\Gamma; S)$ by $h(d):=(b^d, d^{-1}a d)$ for $d \in D_{S}^\Gamma$.
By the definition of $Q_\Phi(\Gamma; S)$, the map $h$ is surjective.
Since the action $\Phi$ is free, the map $h$ is injective by comparing the first component.
For $d_1, d_2 \in \GAlex(D_{S}^\Gamma, \iota_a)$, we have 
\begin{align*}
    h(d_1 \qop d_2) 
    &= h(\iota_a(d_1 d_2^{-1})d_2)
    = (b^{\iota_a(d_1 d_2^{-1})d_2}, (\iota_a(d_1 d_2^{-1})d_2)^{-1} a \iota_a(d_1 d_2^{-1})d_2)\\
    &= (b^{a^{-1} d_1 d_2^{-1} a d_2}, 
    d_2^{-1} a^{-1}d_2 d_1^{-1} a d_1 d_2^{-1} a d_2)\\
    &= (b^{d_1 (d_1^{-1} a d_1)^{-1} d_2^{-1} a d_2}, (d_2^{-1} a d_2)^{-1} (d_1^{-1} a d_1)(d_2^{-1} a d_2))\\
    &= (b^{d_1}, d_1^{-1} a d_1) \qop_\Phi (b^{d_2}, d_2^{-1} a d_2)
    = h(d_1) \qop_\Phi h(d_2).
\end{align*}
Hence the map $h$ is a homomorphism.
Therefore, the map $h: \GAlex(D_{S}^\Gamma, \iota_a) \to Q_\Phi(\Gamma; S)$ is an isomorphism, which completes the proof.
\end{proof}

\begin{ex}[cf. {\cite[Lemma 25 and Theorem 33]{Eisermann-2003-HomologicalCharacterizationUnknota}}]\label{ex:longknotQdle}
	Let $G(K)$ be the knot group of a knot $K$ in $S^3 = \RR^3 \cup \{\infty\}$ with $\infty \in K$.
    Then, $\widetilde{K} := K \cap \RR^3$ is called the \emph{long knot} associated with $K$.
    Roughly speaking, the long knot is obtained by cutting the knot $K$ at the point $\infty$.
	We consider the action $\Phi: X \times G \to X$ of $G := G(K)$ on $X := G(K)$ by right multiplication, and set $S := \{(e, \mu)\}$, 
    where $\mu \in G(K)$ is a meridian of $K$.
	Then, the set $C_G(S)$ is the conjugacy class of $\mu$ in $G(K)$, 
    the subgroup $D_S^G$ is isomorphic to the commutator subgroup $[G(K), G(K)]$, 
    and the subquandle $Q_\Phi(G; S)$ of $X \times_\Phi G$ is isomorphic to the knot quandle of the associated long knot $\widetilde{K}$. 
\end{ex}

\begin{ex}[cf. {\cite[Section 5.1]{Eisermann-2014-QuandleCoveringsTheirGaloisa}}]\label{ex:UniversalCovering}
    Let $X$ be a connected quandle. 
    Recall that the \emph{associated group} $\mathrm{As}(X)$ of $X$ is the group defined by the following presentation:
    \[
        \mathrm{As}(X) = \langle w_x \, (x \in X) \mid w_{x \qop y} = w_y^{-1} w_x w_y \,(x, y \in X) \rangle.
    \]
    Let $\Phi_\mathrm{As}: \mathrm{As}(X) \times \mathrm{As}(X) \to \mathrm{As}(X)$ be the right regular action defined by $g^h := gh$ for $g, h \in \mathrm{As}(X)$.
    Fix $x \in X$, and let $S := \{(e, w_x)\}$.
    Then, the set $C_{\mathrm{As}(X)}(S)$ is equal to the set $\{w_y \mid y \in X\}$, 
    the subgroup $D_S^{\mathrm{As}(X)}$ is equal to the commutator subgroup $[\mathrm{As}(X), \mathrm{As}(X)]$,
    and the subquandle $Q_{\Phi_\mathrm{As}}(\mathrm{As}(X); S)$ is isomorphic to the universal quandle covering of the pointed quandle $(X, x)$.
\end{ex}

\subsection{Augmented quandles and natural embeddings}

We now relate the preceding subquandle construction to augmented quandles.
Recall that an \emph{augmented quandle}, introduced by Joyce \cite{Joyce-1982-ClassifyingInvariantKnotsKnota}, consists of a right action $\Phi: X\times G\to X$ together with a map $\varepsilon: X\to G$, called an \emph{augmentation}, satisfying
\[
  x^{\varepsilon(x)}=x,
  \qquad
  \varepsilon(x^g)=g^{-1}\varepsilon(x)g
\]
for all $x\in X$ and $g\in G$.
Such data induce a quandle structure on $X$ by
\[
  x \qop y = x^{\varepsilon(y)}.
\]
Conversely, for every quandle $X$, the action of $\Inn(X)$ on $X$ together with the map
\[
  \varepsilon_X: X \to \Inn(X),
  \qquad
  \varepsilon_X(x)=s_x,
\]
forms an augmented quandle, and the induced quandle structure coincides with the original one.
We define
\[
  \mathcal{I}_\Phi := \{(x, a) \in X \times G \mid x^a = x\}.
\]
It is easy to see that the set $\mathcal{I}_\Phi$ is invariant under the diagonal action $(x,a)^g := (x^g, g^{-1}ag)$ for $(x, a) \in X \times_\Phi G$ and $g \in G$.
Moreover, the projection $p_X: \mathcal{I}_\Phi \to X$ is a surjective $G$-equivariant map.
By \cref{prop:conn_comp}, 
the set $\mathcal{I}_\Phi$ is a union of connected components of $X \times_\Phi G$.

\begin{prop}\label{prop:char_aug}
Let $\Phi: X \times G\to X$ be a right action and let $\varepsilon: X\to G$ be a map.
Define
\[
  \sigma_\varepsilon\colon X\to X\times G,
  \qquad
  \sigma_\varepsilon(x):=(x,\varepsilon(x)).
\]
Then the following conditions are equivalent:
\begin{enumerate}
  \item[$(1)$] The map $\varepsilon$ is an augmentation.
  
  \item[$(2)$] The map $\sigma_\varepsilon$ is a $G$-equivariant section of $p_X: \mathcal I_\Phi\to X$.
\end{enumerate}
\end{prop}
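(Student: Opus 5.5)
The equivalence is a direct unpacking of the definitions, so I will match the two augmentation axioms with the two parts of condition $(2)$. Condition $(2)$ says three things: $\sigma_\varepsilon$ takes values in $\mathcal{I}_\Phi$; $p_X\circ\sigma_\varepsilon=\mathrm{id}_X$; and $\sigma_\varepsilon$ is $G$-equivariant for the action $\Phi$ on $X$ and the diagonal action $(x,a)^g=(x^g,g^{-1}ag)$ on $\mathcal{I}_\Phi$.

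The section property $p_X\circ\sigma_\varepsilon=\mathrm{id}_X$ holds automatically for every map $\varepsilon$, because $p_X(x,\varepsilon(x))=x$. By the definition of $\mathcal{I}_\Phi$, the condition $\sigma_\varepsilon(X)\subset\mathcal{I}_\Phi$ says exactly that $x^{\varepsilon(x)}=x$ for all $x\in X$. This is the first augmentation axiom.

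For equivariance, I compare $\sigma_\varepsilon(x^g)=(x^g,\varepsilon(x^g))$ with $\sigma_\varepsilon(x)^g=(x^g,g^{-1}\varepsilon(x)g)$. The first components always agree, so equivariance is equivalent to $\varepsilon(x^g)=g^{-1}\varepsilon(x)g$ for all $x$ and $g$. This is the second augmentation axiom.

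For $(1)\Rightarrow(2)$, the two axioms give, respectively, that $\sigma_\varepsilon$ lands in $\mathcal{I}_\Phi$ and that it is equivariant; the section property is automatic. For $(2)\Rightarrow(1)$, the same identifications read in reverse recover both axioms.

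The only point needing care is that $G$-equivariance must be understood with respect to the restriction of the diagonal action to $\mathcal{I}_\Phi$. This restriction is well defined because $\mathcal{I}_\Phi$ is invariant under the diagonal action, as noted before the statement. I do not expect any real obstacle beyond this bookkeeping.
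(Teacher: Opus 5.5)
Your proposal is correct and follows essentially the same route as the paper: membership $\sigma_\varepsilon(x)\in\mathcal{I}_\Phi$ encodes $x^{\varepsilon(x)}=x$, and comparing the second components of $\sigma_\varepsilon(x^g)$ and $\sigma_\varepsilon(x)^g$ encodes $\varepsilon(x^g)=g^{-1}\varepsilon(x)g$. You are slightly more explicit than the paper in pointing out that landing in $\mathcal{I}_\Phi$, rather than just in $X\times G$, is exactly where the first augmentation axiom is used; the paper calls this step clear from the definition in the direction $(1)\Rightarrow(2)$.
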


\begin{proof}
First, we assume $(1)$.
From the definition, it is clear that $\sigma_\varepsilon$ is a section of $p_X$.
Let $x \in X$ and $g \in G$.
Since $\varepsilon$ is an augmentation, it satisfies $\varepsilon(x^g) = g^{-1}\varepsilon(x)g$.
Then, we have 
\begin{align*}
    \sigma_\varepsilon(x^g) 
    = (x^g, \varepsilon(x^g)) 
    = (x^g, g^{-1}\varepsilon(x)g)
    = (x, \varepsilon(x))^g
    = \sigma_\varepsilon(x)^g,
\end{align*}
and hence the map $\sigma_\varepsilon$ is $G$-equivariant.

Conversely, assume $(2)$.
Since $\sigma_\varepsilon$ is a section of $p_X: \mathcal{I}_\Phi \to X$, the element $\sigma_\varepsilon(x) = (x, \varepsilon(x))$ is in  $\mathcal{I}_\Phi$ for any $x \in X$.
This shows that $x^{\varepsilon
(x)} = x$.
Moreover, since $\sigma_\varepsilon$ is $G$-equivariant, for any $x \in X$ and $g \in G$, it satisfies
\[
   (x^g, \varepsilon(x^g)) 
    = \sigma_\varepsilon(x^g)
    = \sigma_\varepsilon(x)^g
    = (x, \varepsilon(x))^g
    = (x^g, g^{-1}\varepsilon(x)g).
\]
Thus, we have $\varepsilon(x^g) = g^{-1}\varepsilon(x)g$.
Therefore, we conclude that $\varepsilon$ is an augmentation, which completes the proof.
\end{proof}

\begin{prop}\label{prop:aug_emb}
Let $\Phi: X \times G \to X$ be a right action, and let $\varepsilon: X \to G$ be an augmentation.
Equip $X$ with the quandle structure induced by $\varepsilon$.
Then the following statements hold:
\begin{enumerate}
  \item[$(1)$] The map $\sigma_\varepsilon: X \longrightarrow X\times_\Phi G$ defined by $x \longmapsto (x,\varepsilon(x))$ is an injective quandle homomorphism.

  \item[$(2)$] $\mathrm{Graph}(\varepsilon) := \{(x,\varepsilon(x))\mid x\in X\}$ is a subquandle of $X \times_\Phi G$.

  \item[$(3)$] $\mathrm{Graph}(\varepsilon)$ is invariant under the diagonal $G$-action.
\end{enumerate}
\end{prop}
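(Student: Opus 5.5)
Our plan is to verify the homomorphism property in (1) directly, and then obtain (2) and (3) as consequences of (1) and of \cref{prop:char_aug}. Injectivity of $\sigma_\varepsilon$ is immediate, since the first coordinate recovers $x$. For the homomorphism property, take $x,y\in X$ and compute with the definition of $\qop_\Phi$:
\[
\sigma_\varepsilon(x)\qop_\Phi\sigma_\varepsilon(y)=(x,\varepsilon(x))\qop_\Phi(y,\varepsilon(y))
=\bigl(x^{\varepsilon(x)^{-1}\varepsilon(y)},\ \varepsilon(y)^{-1}\varepsilon(x)\varepsilon(y)\bigr).
\]
Two facts about the augmentation handle the coordinates. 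In the first coordinate, the relation $x^{\varepsilon(x)}=x$ gives $x^{\varepsilon(x)^{-1}}=x$, so the first coordinate equals $x^{\varepsilon(y)}=x\qop y$. In the second coordinate, the equivariance $\varepsilon(x^g)=g^{-1}\varepsilon(x)g$ with $g=\varepsilon(y)$ gives $\varepsilon(x\qop y)=\varepsilon(x^{\varepsilon(y)})=\varepsilon(y)^{-1}\varepsilon(x)\varepsilon(y)$. Hence the product equals $(x\qop y,\varepsilon(x\qop y))=\sigma_\varepsilon(x\qop y)$.

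For (2), observe that $\mathrm{Graph}(\varepsilon)$ is exactly the image $\sigma_\varepsilon(X)$. By (1), this image is closed under $\qop_\Phi$. We must also check closure under the inverse operation, that is, $s_a(A)=A$ rather than merely $s_a(A)\subset A$. For this we use that $X$ is a quandle, so $s_y$ is a bijection on $X$, and $\sigma_\varepsilon$ intertwines $s_y$ with $s_{\sigma_\varepsilon(y)}$. Therefore $s_{\sigma_\varepsilon(y)}$ maps $\sigma_\varepsilon(X)$ onto $\sigma_\varepsilon(s_y(X))=\sigma_\varepsilon(X)$, which gives equality. Alternatively, one can verify closure under $\qop_\Phi^{-1}$ directly using the formula for $t_{(y,h)}$ in \cref{lem:ActQdleStr} together with the identity $\varepsilon(x^{g^{-1}})=g\varepsilon(x)g^{-1}$.

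For (3), by \cref{prop:char_aug} the map $\sigma_\varepsilon$ is $G$-equivariant. Therefore $\sigma_\varepsilon(x)^g=\sigma_\varepsilon(x^g)\in\mathrm{Graph}(\varepsilon)$ for all $x\in X$ and $g\in G$.

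No step is a real obstacle. The only point needing care is making sure the subquandle condition in (2) gives surjectivity of $s_a$ on the subset, and the bijectivity argument above settles it.
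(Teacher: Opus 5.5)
Your proposal is correct and follows essentially the same route as the paper. The paper also obtains (1) from $x^{\varepsilon(x)^{-1}}=x$ together with the $G$-equivariance of $\sigma_\varepsilon$ from \cref{prop:char_aug}, which is exactly your identity $\varepsilon(x^{\varepsilon(y)})=\varepsilon(y)^{-1}\varepsilon(x)\varepsilon(y)$, and it deduces (2) and (3) from $\mathrm{Graph}(\varepsilon)$ being the image of a $G$-equivariant homomorphism; your check that $s_{\sigma_\varepsilon(y)}$ maps $\sigma_\varepsilon(X)$ onto itself, using that $s_y$ is bijective on $X$, just makes explicit a step the paper calls ``immediate.''
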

\begin{proof}
  We show $(1)$.
  It is clear that the map $\sigma_\varepsilon$ is injective.
  Recall that the point symmetry at $(x, g) \in X \times_\Phi G$ is denoted by $s_g:=s_{(x,g)}$.
  Then, it satisfies
  \begin{align*}
    \sigma_\varepsilon(x)^{\varepsilon(y)}
    = (x, \varepsilon(x))^{\varepsilon(y)}
    = (x^{\varepsilon(y)}, \varepsilon(y)^{-1}\varepsilon(x)\varepsilon(y))
    = \sigma_\varepsilon(x)^{s_{\varepsilon(y)}}
  \end{align*}
  for any $x, y \in X$.
  Since $\sigma_\varepsilon$ is $G$-equivariant by \cref{prop:char_aug}, we have
  \begin{align*}
      \sigma_\varepsilon(x \qop y)
      = \sigma_\varepsilon(x^{\varepsilon(y)})
      = \sigma_\varepsilon(x)^{\varepsilon(y)}
      = \sigma_\varepsilon(x)^{s_{\varepsilon(y)}}
      = \sigma_\varepsilon(x)^{s_{\sigma_\varepsilon(y)}}
      = \sigma_\varepsilon(x) \qop_\Phi \sigma_\varepsilon(y)
  \end{align*}
  for any $x, y \in X$.
  Thus, the assertion $(1)$ holds.

  Since $\mathrm{Graph}(\varepsilon)$ is the image of the $G$-equivariant quandle homomorphism $\sigma_\varepsilon$,
  assertions $(2)$ and $(3)$ immediately follow, which completes the proof.
\end{proof}

The following proposition describes 
$\mathrm{Graph}(\varepsilon)$ in terms of the construction introduced in \cref{subsec:subqdle}.

\begin{prop}\label{prop:aug_subqdle}
Let $\Phi: X\times G\to X$ be a right action and let $\varepsilon: X\to G$ be an augmentation.
Let $H_\varepsilon$ be the subgroup of $G$ generated by $\varepsilon(X)$.
Let $B \subset X$ be a complete set of representatives for the $H_\varepsilon$-orbits in $X$, and put
\[
  S_\varepsilon
  := \{(b,\varepsilon(b))\mid b\in B\} \subset X\times H_\varepsilon.
\]
Then we have
  \[
    C_{H_\varepsilon}(S_\varepsilon)
    = \varepsilon(X), \qquad
    D^{H_\varepsilon}_{S_\varepsilon} = 
    \left\langle
      \varepsilon(x)\varepsilon(y)^{-1} \mid x,y\in X
    \right\rangle, \qquad
    \mathrm{Graph}(\varepsilon)
    = Q_\Phi(H_\varepsilon;S_\varepsilon).
  \]
\end{prop}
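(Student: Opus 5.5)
We prove the three equalities in the order stated, since each one feeds into the next. Throughout we use the two defining properties of an augmentation: $x^{\varepsilon(x)}=x$ and $\varepsilon(x^g)=g^{-1}\varepsilon(x)g$.

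\emph{First equality.} By definition,
\[
C_{H_\varepsilon}(S_\varepsilon)=\{g^{-1}\varepsilon(b)g \mid g\in H_\varepsilon,\ b\in B\}.
\]
By equivariance, $g^{-1}\varepsilon(b)g=\varepsilon(b^g)$, so $C_{H_\varepsilon}(S_\varepsilon)\subset\varepsilon(X)$. Conversely, for $x\in X$ we write $x=b^g$ with $b\in B$ and $g\in H_\varepsilon$; this is possible because $B$ meets every $H_\varepsilon$-orbit. Then $\varepsilon(x)=g^{-1}\varepsilon(b)g$ lies in $C_{H_\varepsilon}(S_\varepsilon)$.

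\emph{Second equality.} This is immediate from the first one and the definition of $D^{H_\varepsilon}_{S_\varepsilon}$ as the subgroup generated by $gh^{-1}$ with $g,h\in C_{H_\varepsilon}(S_\varepsilon)=\varepsilon(X)$.

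\emph{Third equality.} For ``$\subset$'', let $x\in X$ and write $x=b^g$ with $g\in H_\varepsilon$. Then $(x,\varepsilon(x))=(b^g,g^{-1}\varepsilon(b)g)$, but we need an element of $D^{H_\varepsilon}_{S_\varepsilon}$ rather than $H_\varepsilon$, so we adjust $g$. Write $g=\varepsilon(x_1)^{n_1}\cdots\varepsilon(x_k)^{n_k}$ with $n_i=\pm1$ and put $N=\sum_i n_i$. Set $d:=\varepsilon(b)^{-N}g$. By \cref{lem:D_S}, using $\varepsilon(b)\in\varepsilon(X)$, the element $d$ lies in $D^{H_\varepsilon}_{S_\varepsilon}$. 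Since $\varepsilon(b)$ fixes $b$, we get $b^d=b^g=x$. Moreover, conjugating $\varepsilon(b)$ by a power of itself does nothing, so $d^{-1}\varepsilon(b)d=g^{-1}\varepsilon(b)g=\varepsilon(x)$. Hence $(x,\varepsilon(x))=(b^d,d^{-1}\varepsilon(b)d)\in Q_\Phi(H_\varepsilon;S_\varepsilon)$. (Equivalently, one can apply \cref{lem:formula} to the graph points $(x_i,\varepsilon(x_i))$, which stay in the subquandle by \cref{prop:aug_emb}.)

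For ``$\supset$'', an element of $Q_\Phi(H_\varepsilon;S_\varepsilon)$ has the form $(b^d,d^{-1}\varepsilon(b)d)$, and by equivariance this equals $(b^d,\varepsilon(b^d))\in\mathrm{Graph}(\varepsilon)$. Alternatively, since $d\in D^{H_\varepsilon}_{S_\varepsilon}$ is a zero-exponent product of elements of $\varepsilon(X)$ by \cref{lem:D_S}, \cref{lem:formula} shows the point is obtained from $\sigma_\varepsilon(b)$ by quandle operations with graph elements, and the graph is a subquandle by \cref{prop:aug_emb}(2).

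The only delicate step is the ``$\subset$'' direction of the third equality: it requires replacing $g\in H_\varepsilon$ by an element of $D^{H_\varepsilon}_{S_\varepsilon}$ via the correction $\varepsilon(b)^{-N}$, which works because $\varepsilon(b)$ both stabilizes $b$ and commutes with itself.
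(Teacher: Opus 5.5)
Your proposal is correct and follows the paper's proof essentially step for step. The shared steps are the orbit-representative argument for $C_{H_\varepsilon}(S_\varepsilon)=\varepsilon(X)$, and the correction of $g\in H_\varepsilon$ by a power of $\varepsilon(b)$ so that it lands in $D^{H_\varepsilon}_{S_\varepsilon}$ via \cref{lem:D_S}. The paper also uses the same equivariance argument for $Q_\Phi(H_\varepsilon;S_\varepsilon)\subset\mathrm{Graph}(\varepsilon)$. The only difference is that you make the exponent explicit ($k=-N$), which the paper leaves implicit.
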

\begin{proof}
    First, we show that the equality $C_{H_\varepsilon}(S_\varepsilon) = \varepsilon(X)$ holds.
    The inclusion $\subset$ is clear.
    We prove the converse inclusion.
    Let $g \in \varepsilon(X)$.
    Thus, there exists $x \in X$ such that $g = \varepsilon(x)$.
    Since $B$ is a complete set of representatives for $H_\varepsilon$-orbits, 
    there exist $b \in B$ and $h \in H_\varepsilon$ such that $x = b^h$.
    Then, we have $(b, \varepsilon(b)) \in S_\varepsilon$ and
    \[
        (x, g) 
        = (b^h, \varepsilon(b^h)) 
        = (b^h, h^{-1}\varepsilon(b)h).
    \]
    Therefore, we obtain $g = h^{-1}\varepsilon(b)h \in C_{H_\varepsilon}(S_\varepsilon)$, as desired.

    The equality $D^{H_\varepsilon}_{S_\varepsilon} = 
    \left\langle
      \varepsilon(x)\varepsilon(y)^{-1} \mid x,y\in X
    \right\rangle$ 
    follows directly from the definition.
    We now show 
    \[
        \mathrm{Graph}(\varepsilon)
        = Q_\Phi(H_\varepsilon;S_\varepsilon).
    \]
    Let $(x, \varepsilon(x)) \in \mathrm{Graph}(\varepsilon)$ for $x \in X$.
    Then, there exist $b \in B$ and $h \in H_\varepsilon$ such that $x = b^h$.
    From \cref{lem:D_S}, there exists $k \in \ZZ$ such that $g := \varepsilon(b)^kh \in D^{H_\varepsilon}_{S_\varepsilon}$.
    \[
        (x, \varepsilon(x)) = (b^h, h^{-1} \varepsilon(b) h)  = (b^g, g^{-1} \varepsilon(b) g) \in Q_\Phi(H_\varepsilon;S_\varepsilon).
    \]
    Conversely, every element of $Q_\Phi(H_\varepsilon; S_\varepsilon)$ is of the form 
    $(b^h, \varepsilon(b^h)) = (b^h, h^{-1}\varepsilon(b)h)$, and hence belongs to $\mathrm{Graph}(\varepsilon)$.
    Thus, $Q_\Phi(H_\varepsilon; S_\varepsilon) \subset \mathrm{Graph}(\varepsilon)$, 
    which completes the proof.
\end{proof}

Recall the action $\Phi_{\mathrm{Inn}} \colon X\times\mathrm{Inn}(X)\to X$ of the inner automorphism group of a quandle.
By \cref{prop:aug_emb}, 
we obtain the following natural embedding.

\begin{dfn}\label{def:natural_embedding}
Let $X$ be a quandle.
The embedding  
\[
  \varphi_X\colon
  X\longrightarrow
  X\times_{\Phi_{\mathrm{Inn}}}\mathrm{Inn}(X),
  \qquad
  x\longmapsto(x,s_x)
\]
corresponding to the augmentation $s: X \to \Inn(X)$ is called the \emph{natural embedding} of $X$.
\end{dfn}

\cref{prop:aug_subqdle} also yields the following corollary.

\begin{cor}
Let $X$ be a quandle, let $B\subset X$ be a base point set, and 
let $S:=\varphi_X(B) \subset X\times\mathrm{Inn}(X)$.
Then, we have
\[
  C_{\mathrm{Inn}(X)}(S)
  = \{s_x\mid x\in X\}, \qquad
  D^{\mathrm{Inn}(X)}_S
  = \mathrm{Dis}(X), \qquad
  \varphi_X(X)
  =
  Q_{\Phi_{\mathrm{Inn}}}
  \left(\mathrm{Inn}(X);S\right)
  \cong X.
\]
\end{cor}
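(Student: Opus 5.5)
This corollary follows by specializing \cref{prop:aug_subqdle} to the augmentation $\varepsilon = s\colon X \to \Inn(X)$ of the augmented quandle $(\Phi_{\Inn}, s)$. Two preliminary checks are needed.

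First, $s$ is an augmentation. We have $x^{s_x} = x \qop x = x$ by (Q1). The identity $s_{x^f} = f^{-1} s_x f$ holds for any automorphism $f$, in particular for $f \in \Inn(X)$. This is the standard consequence of (Q3); with our right-action convention, it reads $(y \qop x)^f = y^f \qop x^f$.

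Second, the subgroup generated by $s(X)$ is all of $\Inn(X)$, by definition. So $H_\varepsilon = \Inn(X)$, and the $H_\varepsilon$-orbits in $X$ are exactly the connected components. Hence a base point set $B$ is precisely a complete set of representatives for the $H_\varepsilon$-orbits, as \cref{prop:aug_subqdle} requires. With this $B$, the set $S_\varepsilon = \{(b, s_b) \mid b \in B\}$ is exactly $\varphi_X(B) = S$.

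Applying \cref{prop:aug_subqdle} then gives three conclusions directly.
\begin{enumerate}
\item $C_{\Inn(X)}(S) = s(X) = \{s_x \mid x \in X\}$.
\item $D^{\Inn(X)}_S = \langle s_x s_y^{-1} \mid x, y \in X \rangle$. This is $\Dis(X)$ by the definition of the displacement group.
\item $\mathrm{Graph}(s) = Q_{\Phi_{\Inn}}(\Inn(X); S)$. Since $\mathrm{Graph}(s) = \varphi_X(X)$, this gives the third equality.
\end{enumerate}

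Finally, $\varphi_X(X) \cong X$ holds because, by \cref{prop:aug_emb}(1), $\varphi_X = \sigma_s$ is an injective quandle homomorphism. It is therefore an isomorphism onto its image, since the quandle structure induced by $s$ on $X$ is the original one ($x \qop y = x^{s_y}$).

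The only step needing real care is the equivariance $s_{x^f} = f^{-1} s_x f$ under the left-to-right multiplication convention. Even this is routine, so I expect no genuine obstacle.
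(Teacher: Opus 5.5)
Your proposal is correct and follows the paper's route: the paper gives no separate proof and obtains this corollary by specializing \cref{prop:aug_subqdle} to the augmentation $s\colon X\to\Inn(X)$, exactly as you do. The supporting checks you include are the right ones: $H_s=\Inn(X)$, so a base point set represents the $H_s$-orbits; $S_s=\varphi_X(B)$; $\langle s_xs_y^{-1}\rangle=\Dis(X)$; and $\varphi_X(X)\cong X$ follows from \cref{prop:aug_emb}(1).
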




\section{The Cayley-type embedding theorem}

In this section, we apply the natural embedding constructed above to prove the main theorem of the paper.
For a positive integer $n$,
we denote $[n] := \{1, \dots, n\}$.
Let $\Phi_n: [n] \times \SS_n \to [n]$ be the right action of $\SS_n$ on $[n]$ as permutations.

\begin{dfn}\label{def:QS_n}
  The quandle $[n] \times_{\Phi_n} \SS_n$ obtained from the right action $\Phi_n: [n] \times \SS_n \to [n]$ is denoted by $Q\SS_n$.
\end{dfn}

In quandle theory, the quandle $Q\SS_n$ plays a role analogous to that of $\SS_n$ in group theory.
In fact, a Cayley-type embedding theorem holds as follows.

\begin{thm}\label{thm:CayEmbThm}
  Let $X$ be a quandle of cardinality $n$.
  Then, there exists an injective quandle homomorphism $X \to Q\SS_n$.
\end{thm}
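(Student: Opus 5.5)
The plan is to compose the natural embedding of \cref{def:natural_embedding} with a map induced by a morphism of right actions, using the functoriality of \cref{prop:EquivariantHom}.

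First, fix a bijection $f\colon X\to[n]$. Since $\Inn(X)$ acts on $X$ from the right by automorphisms (which are in particular permutations of $X$), transporting along $f$ gives a group homomorphism
\[
\rho\colon \Inn(X)\to\SS_n,\qquad \rho(\phi):=f^{-1}\phi f,
\]
written in the paper's left-to-right convention, so that $i^{\rho(\phi)}=f(f^{-1}(i)^{\phi})$. I will check that $\rho$ is a homomorphism with respect to the right-action conventions (multiplication $fg=g\circ f$). This holds because $\rho$ is just the permutation representation of $\Inn(X)\le\mathrm{Sym}(X)$, conjugated by $f$. The same formula gives $f(x^\phi)=f(x)^{\rho(\phi)}$ for all $x\in X$ and $\phi\in\Inn(X)$. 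Hence $(f,\rho)$ is a morphism from $\Phi_{\Inn}$ to $\Phi_n$.

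Second, by \cref{prop:EquivariantHom} the product $f\times\rho\colon X\times_{\Phi_{\Inn}}\Inn(X)\to Q\SS_n$ is a quandle homomorphism. Moreover, $\rho$ is injective, since it is the restriction of the isomorphism $\mathrm{Sym}(X)\cong\SS_n$ to the subgroup $\Inn(X)$. As $f$ is also injective, $f\times\rho$ is injective.

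Third, by \cref{prop:aug_emb}, the natural embedding $\varphi_X\colon x\mapsto(x,s_x)$ is an injective quandle homomorphism $X\to X\times_{\Phi_{\Inn}}\Inn(X)$. The composite
\[
(f\times\rho)\circ\varphi_X\colon x\mapsto \bigl(f(x),\rho(s_x)\bigr)
\]
is then an injective homomorphism $X\to Q\SS_n$, as required.

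There is no real obstacle here. The only care needed is with the left/right conventions: one must make sure that $\rho$ is a homomorphism rather than an anti-homomorphism, and that the equivariance $f(x^\phi)=f(x)^{\rho(\phi)}$ holds in the form required by the definition of a morphism of right actions. Injectivity of $\rho$ is not even needed, since injectivity of $f$ alone already makes $(f\times\rho)\circ\varphi_X$ injective on the first coordinate.
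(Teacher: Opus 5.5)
Your proposal is correct and follows the paper's proof. The paper likewise transports the $\Inn(X)$-action along a bijection $X\to[n]$ to obtain a morphism of right actions, applies \cref{prop:EquivariantHom}, and precomposes with the natural embedding $\varphi_X$, with injectivity coming from the first coordinate.
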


\begin{proof}
  Write $X = \{x_1, \dots, x_n\}$.
  Let $f: X \to [n]$ be the map defined by $f(x_i) := i$.
  Then, this map induces the permutation representation $\rho: \Inn(X) \to \SS_n$ by \[
    f(x^g) = f(x)^{\rho(g)}
  \]
  for $x \in X$ and $g \in \Inn(X)$.
  Then, the pair $(f, \rho)$ is a morphism $\Phi_\Inn \to \Phi_n$ of right actions.
  By \cref{prop:EquivariantHom}, the map $f \times \rho: X \times_{\Phi_\Inn} \Inn(X) \to Q\SS_n$ is a homomorphism.
  Recall that the natural embedding $\varphi_X: X \to X \times_\Phi \Inn(X)$ is an injective homomorphism; see \cref{prop:aug_emb} and \cref{def:natural_embedding}.
  Thus, the composition $(f \times \rho) \circ \varphi_X: X \to Q\SS_n$ is a quandle homomorphism.
  Since $(f \times \rho) \circ \varphi_X(x_i) = (i, \rho(s_{x_i}))$, this map is injective, which completes the proof.
\end{proof}

Note that the order of the map $\zeta_1: Q\SS_n \to Q\SS_n$ is equal to the least common multiple of $\{1, \dots, n\}$.
Since the action $\Phi_n: [n] \times \SS_n \to [n]$ is faithful,
we can determine the inner automorphism group of $Q\SS_n$ from \cref{cor:InnerAutomorphismGroup2}.
\begin{prop}\label{prop:Inn_QSn}
  $\Inn(Q\SS_n) \cong \ZZ/l\ZZ \times \SS_n$, where $l$ is the least common multiple of $\{1, \dots, n\}$.
\end{prop}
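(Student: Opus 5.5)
We will obtain the statement as a direct specialization of \cref{cor:InnerAutomorphismGroup2}, which gives
\[
\Inn(X \times_\Phi G) \cong Z \times G/(Z(G)\cap K)
\]
for any right action $\Phi$. We apply it to $\Phi_n\colon [n]\times \SS_n \to [n]$, so we need two inputs: the group $G/(Z(G)\cap K)$ and the cyclic group $Z=\{\zeta_m\mid m\in\ZZ\}$.

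\textbf{The quotient.} The natural action of $\SS_n$ on $[n]$ is faithful, so the kernel $K$ is trivial. Hence $Z(\SS_n)\cap K=\{1\}$, and $G/(Z(G)\cap K)=\SS_n$. Note that this needs no case distinction for $n\le 2$, where $\SS_n$ has nontrivial center, because faithfulness alone kills the intersection.

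\textbf{The cyclic factor.} The group $Z$ is cyclic, generated by $\zeta_1$, since $\zeta_m=\zeta_1^m$. Its order is computed by \cref{lem:zeta_n}~(2): it equals the exponent of the permutation group induced by $\SS_n$ on $[n]$, which is $\SS_n$ itself by faithfulness. The exponent of $\SS_n$ is the least common multiple of the orders of its elements. The order of a permutation is the lcm of its cycle lengths, and the cycle lengths range over subsets of $\{1,\dots,n\}$. An $m$-cycle exists for every $1\le m\le n$, so $m$ divides the exponent. Conversely, every element order divides $l=\mathrm{lcm}(1,\dots,n)$. Therefore the exponent is $l$, and $Z\cong \ZZ/l\ZZ$. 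This matches the remark preceding the proposition.

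\textbf{Conclusion and the one point to check.} Combining the two computations in \cref{cor:InnerAutomorphismGroup2} gives $\Inn(Q\SS_n)\cong \ZZ/l\ZZ\times \SS_n$. I do not expect a real obstacle here. The only thing to verify is that the splitting in \cref{cor:InnerAutomorphismGroup2} really yields an internal direct product. For this, recall that $Z$ is central by \cref{lem:zeta_n}~(3),(4), and that the section $\bar i$ is injective. The subgroups $Z$ and $\bar i(\SS_n)$ are complementary because $\bar i$ is a section of $F$, whose kernel is $Z$. So the corollary applies verbatim.
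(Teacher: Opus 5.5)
Your proposal is correct and follows the paper's route: the paper also obtains the result by specializing \cref{cor:InnerAutomorphismGroup2}, using faithfulness of $\Phi_n$ (so $K=1$ and the quotient is $\SS_n$) together with the fact that $\zeta_1$ has order equal to the exponent $l$ of $\SS_n$. Your extra check that $Z$ and $\bar i(\SS_n)$ give an internal direct product is already contained in that corollary, so it is harmless but not needed. One small nit in a side remark: $\SS_1$ has trivial center, so your comment about nontrivial center applies only to $n=2$.
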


To describe the connected components of $Q\SS_n$, we recall some standard terminology for the symmetric group $\SS_n$.
A \emph{partition} $\lambda$ of $n$ is a sequence $(m_1, \dots, m_n)$ of non-negative integers such that $\sum_{k=1}^n k m_k = n$,
and then we denote $\lambda = (1^{m_1}2^{m_2} \cdots n^{m_n})$ and $\lambda \vdash n$.
The decomposition of $\sigma \in \SS_n$ into a product of disjoint cycles is called the \emph{cycle decomposition} of $\sigma$.
The cycle decomposition is unique up to reordering the disjoint cycles and cyclically permuting the entries within each cycle.
The \emph{cycle type} of $\sigma$ is the partition $\lambda(\sigma) := (1^{m_1}2^{m_2} \cdots n^{m_n})$, where $m_k$ is the number of cycles of length $k$ in the cycle decomposition.
For $i \in [n]$, we denote by $d(i, \sigma)$ the length of the cycle containing $i$ in the cycle decomposition of $\sigma$.
Then, the following holds.

\begin{prop}\label{prop:conn_QSn}
Let $(i,\sigma),(j,\tau) \in Q\SS_n$. Then the following conditions are equivalent:
\begin{enumerate}
  \item There exists $\alpha\in \mathcal{S}_n$ such that
    \[
    j=i^\alpha
    \quad\text{and}\quad
    \tau=\alpha^{-1}\sigma\alpha.
    \]

  \item $\lambda(\sigma) = \lambda(\tau)$ and $d(i, \sigma) = d(j, \tau)$.

  \item $(i, \sigma)$ and $(j, \tau)$ belong to the same connected component in $Q\SS_n$.
\end{enumerate}
\end{prop}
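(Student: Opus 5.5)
The equivalence (1)$\Leftrightarrow$(3) is exactly \cref{prop:conn_comp}~(1) specialized to the action $\Phi_n$. There the diagonal action is $\Psi((i,\sigma),\alpha)=(i^\alpha,\alpha^{-1}\sigma\alpha)$, so it remains to prove (1)$\Leftrightarrow$(2). This is a purely combinatorial statement about the symmetric group.

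For (1)$\Rightarrow$(2), I would use the standard relabeling description of conjugation. With the right-action convention, if $\tau=\alpha^{-1}\sigma\alpha$ then $(i^\alpha)^\tau=(i^\sigma)^\alpha$. Hence $\alpha$ carries each cycle $(i\; i^\sigma\; i^{\sigma^2}\cdots)$ of $\sigma$ onto the cycle $(i^\alpha\; i^{\sigma\alpha}\;\cdots)$ of $\tau$, and lengths are preserved. Two consequences follow: $\lambda(\sigma)=\lambda(\tau)$, and the cycle of $\sigma$ through $i$ goes to the cycle of $\tau$ through $j=i^\alpha$. Therefore $d(i,\sigma)=d(j,\tau)$.

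For (2)$\Rightarrow$(1), I would build $\alpha$ explicitly. Let $d=d(i,\sigma)=d(j,\tau)$. Write the cycle of $\sigma$ containing $i$ as $(i\; i^\sigma\;\cdots\; i^{\sigma^{d-1}})$, and the cycle of $\tau$ containing $j$ as $(j\; j^\tau\;\cdots\; j^{\tau^{d-1}})$. Define $\alpha$ on this cycle by $(i^{\sigma^k})^\alpha:=j^{\tau^k}$. Since $\lambda(\sigma)=\lambda(\tau)$, the remaining cycles of $\sigma$ and of $\tau$ have the same multiset of lengths: we remove one cycle of length $d$ from each side. So I would choose any length-preserving bijection between the remaining cycles. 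For each matched pair of cycles, pick starting points and define $\alpha$ in the same way.

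Then $\alpha$ is a bijection of $[n]$ satisfying $(x^\sigma)^\alpha=(x^\alpha)^\tau$ for all $x$. This gives $\sigma\alpha=\alpha\tau$, i.e.\ $\tau=\alpha^{-1}\sigma\alpha$, and also $i^\alpha=j$ by construction.

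The only delicate point is this construction: the cycle through $i$ must be matched to the cycle through $j$ before the other cycles are matched arbitrarily. That matching is possible exactly because the condition $d(i,\sigma)=d(j,\tau)$ is included in (2). Verifying the conjugation identity is then a routine check on each cycle, using the paper's left-to-right convention for products.
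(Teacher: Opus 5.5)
Your proposal is correct and takes essentially the same route as the paper. The paper also gets (1)$\Leftrightarrow$(3) from \cref{prop:conn_comp} and (1)$\Rightarrow$(2) from the relabeling identity $(i^\alpha)^{\tau^k}=(i^{\sigma^k})^\alpha$, and it proves (2)$\Rightarrow$(1) by the same explicit cycle-by-cycle construction of $\alpha$, matching the cycle through $i$ with the cycle through $j$ before pairing the remaining cycles by length.
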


\begin{proof}
The equivalence of assertions $(1)$ and $(3)$ follows from \cref{prop:conn_comp}.
Here, we show that $(1)$ and $(2)$ are equivalent.
First, assume $(1)$.
Thus, there exists $\alpha\in\mathcal{S}_n$ such that
$j=i^\alpha$ and $\tau=\alpha^{-1}\sigma\alpha$.
Since $\sigma$ and $\tau$ are conjugate in $\SS_n$, they have the same cycle type, that is, $\lambda(\sigma) = \lambda(\tau)$.
Let $d=d(i,\sigma)$. Then, the elements $i,\ i^\sigma,\ \ldots,\ i^{\sigma^{d-1}}$
are pairwise distinct and $i^{\sigma^d}=i$. Since
$\tau=\alpha^{-1}\sigma\alpha$, we have $(i^\alpha)^{\tau^k}=(i^{\sigma^k})^\alpha$ for every $k\geq 0$. Hence, the elements $j,\ j^\tau,\ \ldots,\ j^{\tau^{d-1}}$
are pairwise distinct, while
\[
    j^{\tau^d} = (i^\alpha)^{\tau^d} = (i^{\sigma^d})^\alpha = i^\alpha = j.
\]
Therefore, we have $d(j,\tau)=d(i,\sigma)$.

Conversely, assume $(2)$.
Let $d := d(i,\sigma)=d(j,\tau)$.
Write the cycle of $\sigma$ containing $i$ as
$C=(i,i^\sigma,\ldots,i^{\sigma^{d-1}})$
and the cycle of $\tau$ containing $j$ as
$D=(j,j^\tau,\ldots,j^{\tau^{d-1}})$.
Define a bijection from $C$ to $D$ by
\[
(i^{\sigma^k})^\alpha:=j^{\tau^k},
\qquad
0\leq k<d.
\]
In particular, $i^\alpha=j$.
Since $\lambda(\sigma) = \lambda(\tau)$, after removing $C$ from the cycle decomposition of $\sigma$ and $D$ from that of $\tau$, the remaining cycles can be paired so that the corresponding cycles have the same length. 
Let $C_1,\ldots,C_r$ be the remaining cycles of $\sigma$, and let $D_1,\ldots,D_r$ be the remaining cycles of $\tau$, indexed so that
$|C_l|=|D_l|$ for every $l$.
For each $\ell$, write
$C_l
=
(a_l,a_l^\sigma,\ldots,
a_l^{\sigma^{m_l-1}})$
and
$D_l
=
(b_l,b_l^\tau,\ldots,
b_l^{\tau^{m_l-1}})$,
where $m_l=|C_l|=|D_l|$.
Define
\[
(a_l^{\sigma^k})^\alpha
:=
b_l^{\tau^k},
\qquad
0 \leq k<m_l.
\]
Together with the definition of $\alpha$ on $C$, this determines a permutation $\alpha \in \SS_n$.
By construction, for every $x\in[n]$, it satisfies $(x^\sigma)^\alpha=(x^\alpha)^\tau$.
Hence, we have $\sigma\alpha=\alpha\tau$,
and therefore $\tau=\alpha^{-1}\sigma\alpha$.
Thus the assertion $(1)$ holds.
This completes the proof.
\end{proof}

For a partition $\lambda=(1^{m_1}2^{m_2}\cdots n^{m_n})$ of $n$,
we define
\[
\mathrm{supp}(\lambda) := \{d\in [n]\mid m_d>0\}.
\]
For $d\in\mathrm{supp}(\lambda)$, let
\[
\Omega_{\lambda,d} :=
\left\{
(i,\sigma)\in [n]\times \SS_n \mid \lambda(\sigma)=\lambda
\text{ and }
d(i,\sigma)=d
\right\}.
\]
By \cref{prop:conn_QSn}, we obtain the following description of $\pi_0(Q\mathcal{S}_n)$.

\begin{cor}\label{cor:conn_QSn}
The connected components of $Q\mathcal{S}_n$ are precisely the subsets $\Omega_{\lambda,d}$ for a partition $\lambda \vdash n$ and $d \in \mathrm{supp}(\lambda)$.
In particular, there is a bijection
\[
\pi_0(Q\mathcal{S}_n)
\longrightarrow
\sqcup_{\lambda\vdash n}\mathrm{supp}(\lambda)
\]
given by
$[(i,\sigma)] \longmapsto \bigl(\lambda(\sigma),d(i,\sigma)\bigr)$.
\end{cor}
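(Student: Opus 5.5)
The plan is to deduce \cref{cor:conn_QSn} directly from \cref{prop:conn_QSn}, with one extra check that every pair $(\lambda,d)$ actually occurs. The key steps, in order, are as follows.

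First, by the equivalence $(2)\Leftrightarrow(3)$ of \cref{prop:conn_QSn}, two elements $(i,\sigma),(j,\tau)\in Q\SS_n$ lie in the same connected component if and only if $\lambda(\sigma)=\lambda(\tau)$ and $d(i,\sigma)=d(j,\tau)$. Hence the map
\[
\Theta\colon \pi_0(Q\SS_n)\to \sqcup_{\lambda\vdash n}\mathrm{supp}(\lambda),\qquad [(i,\sigma)]\mapsto \bigl(\lambda(\sigma),d(i,\sigma)\bigr),
\]
is well defined (implication $(3)\Rightarrow(2)$) and injective (implication $(2)\Rightarrow(3)$). It does land in the stated target, because $d(i,\sigma)$ is the length of an actual cycle of $\sigma$. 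So $m_{d(i,\sigma)}>0$, that is, $d(i,\sigma)\in\mathrm{supp}(\lambda(\sigma))$.

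Second, and this is the only step with any content, I would show surjectivity. Given $\lambda=(1^{m_1}\cdots n^{m_n})\vdash n$ and $d\in\mathrm{supp}(\lambda)$, I build $\sigma$ by cutting $[n]$ into consecutive blocks: $m_k$ blocks of size $k$ for each $k$. This is possible since $\sum_k km_k=n$. On each block, $\sigma$ acts as the cyclic shift. Then $\lambda(\sigma)=\lambda$. Because $m_d>0$, some block $C$ has size $d$, and I pick any $i\in C$, which gives $d(i,\sigma)=d$. Therefore $\Theta([(i,\sigma)])=(\lambda,d)$, and $\Theta$ is a bijection.

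Finally, the fibre of $\Theta$ over $(\lambda,d)$, viewed as a subset of $[n]\times\SS_n$, is by definition exactly $\Omega_{\lambda,d}$. It is nonempty by the surjectivity step and is a single connected component by the first step. So the connected components are precisely the sets $\Omega_{\lambda,d}$ for $\lambda\vdash n$ and $d\in\mathrm{supp}(\lambda)$, and these sets are pairwise distinct. I expect no real obstacle: surjectivity is the only point not already contained in \cref{prop:conn_QSn}, and it needs only the explicit cyclic-block construction above.
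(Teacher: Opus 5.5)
Your proposal is correct and follows the paper's route. The paper deduces the corollary directly from \cref{prop:conn_QSn}, whose equivalence of $(2)$ and $(3)$ gives well-definedness and injectivity. Your cyclic-block construction makes explicit the surjectivity step, namely that every pair $(\lambda,d)$ with $d\in\mathrm{supp}(\lambda)$ occurs, which the paper leaves implicit.
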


We conclude with the linear analogue announced in the introduction.
Let $V$ be an $n$-dimensional vector space over a field $\mathbb{K}$, and fix a basis $\{e_i \mid i \in [n]\}$ of $V$.
We identify $V$ with the space of row vectors $\mathbb{K}^{n}$. 
Under this identification, we let $GL(V)$ act on $V$ from the right by matrix multiplication:
\[
v^A:=vA
\qquad (v\in V,\ A\in GL(V)).
\]
We denote this right action by $\Psi_V: V \times GL(V)\to V$.

\begin{dfn}
  The quandle $V \times_{\Psi_V} GL(V)$ obtained 
  from the right action $\Psi_V: V \times GL(V) \to V$ is denoted by $GLQ(V)$.
\end{dfn}

Let $\rho: \SS_n \to GL(V)$ be the permutation representation.
More precisely, for $\sigma \in \SS_n$, the linear map $\rho(\sigma): V \to V$ is defined by $e_i \rho(\sigma) = e_{\sigma(i)}$.
We define a map $e: [n] \to V$ by $e(i) := e_i$ for $i \in [n]$.
Then, the pair $(e, \rho)$ is a morphism $\Phi_n \to \Psi_V$, and the map $e \times \rho: Q\SS_n \to V \times_{\Psi_V} GL(V)$ is injective.
Thus, we have the following theorem.

\begin{thm}\label{thm:LinearRep}
  For any field $\mathbb{K}$, every quandle of cardinality $n$ embeds into $GLQ(\mathbb{K}^n)$.
\end{thm}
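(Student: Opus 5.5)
The plan is to compose the Cayley-type embedding of \cref{thm:CayEmbThm} with the quandle homomorphism induced by the permutation representation, using the functoriality of the construction (\cref{prop:EquivariantHom}). Let $X$ be a quandle of cardinality $n$ and fix $V=\mathbb{K}^n$ with standard basis $\{e_i\}$.

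\textbf{Step 1: the morphism of actions.} I will check that $(e,\rho)\colon \Phi_n\to\Psi_V$ is a morphism of right actions, where $e(i)=e_i$ and $\rho\colon\SS_n\to GL(V)$ is the permutation representation. Two things must be verified.
\begin{enumerate}
  \item $\rho$ is a group homomorphism under the paper's right-action convention $fg:=g\circ f$, so that $i^{\sigma\tau}=(i^\sigma)^\tau$. Setting $e_i\rho(\sigma):=e_{i^\sigma}$, we get
  \[
    e_i\rho(\sigma)\rho(\tau)=e_{i^\sigma}\rho(\tau)=e_{(i^\sigma)^\tau}=e_{i^{\sigma\tau}}=e_i\rho(\sigma\tau),
  \]
  since matrices act on row vectors from the right.
  \item The equivariance condition $e(i^\sigma)=e(i)^{\rho(\sigma)}$ is exactly the definition $e_{i^\sigma}=e_i\rho(\sigma)$.
\end{enumerate}
This convention check is the only point needing care, and I expect it to be the main obstacle. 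The paper writes $e_{\sigma(i)}$, which should be read as $e_{i^\sigma}$. If one used the opposite convention, $\rho$ would be an anti-homomorphism, and I would replace $\rho(\sigma)$ by its transpose (equivalently by $\rho(\sigma^{-1})$) to restore the homomorphism property.

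\textbf{Step 2: homomorphism and injectivity.} By \cref{prop:EquivariantHom}, the map $e\times\rho\colon Q\SS_n\to GLQ(V)$ is a quandle homomorphism. It is injective for the following reasons:
\begin{enumerate}
  \item $e$ is injective, because the $e_i$ are distinct basis vectors.
  \item $\rho$ is injective, because $\rho(\sigma)=\mathrm{id}$ forces $e_{i^\sigma}=e_i$ for all $i$, hence $\sigma=\mathrm{id}$. This holds over any field, including characteristic $2$, since distinct basis vectors remain distinct.
\end{enumerate}
A product of two injective maps is injective, so $e\times\rho$ is injective.

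\textbf{Step 3: composition.} Let $\iota\colon X\to Q\SS_n$ be the injective quandle homomorphism given by \cref{thm:CayEmbThm}. The composition $(e\times\rho)\circ\iota\colon X\to GLQ(\mathbb{K}^n)$ is a composition of injective quandle homomorphisms, hence an injective quandle homomorphism. This proves the statement.
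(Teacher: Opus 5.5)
Your proposal is correct and matches the paper's argument: the paper also observes that $(e,\rho)$ is a morphism $\Phi_n\to\Psi_V$, so $e\times\rho\colon Q\SS_n\to GLQ(V)$ is an injective homomorphism by \cref{prop:EquivariantHom}, and composes it with the embedding of \cref{thm:CayEmbThm}. Your explicit checks of the right-action convention and of the injectivity of $e$ and $\rho$ fill in details the paper leaves to the reader. Since the paper sets $x^f:=f(x)$, its $e_{\sigma(i)}$ is literally your $e_{i^\sigma}$, so no transpose correction is needed.
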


\begin{rmk}
  The notion of a representation of  a quandle has been studied in \cite{Joyce-1982-ClassifyingInvariantKnotsKnota,Eisermann-2003-HomologicalCharacterizationUnknota}.
  A representation of a quandle in a group $G$ is a quandle homomorphism into $\Conj(G)$.
  Linear representations of racks and quandles were studied by Elhamdadi and Moutuou \cite{Elhamdadi-2018-FinitelyStableRacksRacka}, and, more recently, by Maassarani \cite{Maassarani-2026-IrreducibleRepresentationsQuandlesa}.
  In group theory, every finite group admits a finite-dimensional faithful linear representation.
  In contrast, there exist finite quandles that admit no injective quandle homomorphism into $\Conj(GL(V))$ for any finite-dimensional vector space $V$.
  For example, let $Q_8 = \{\pm 1, \pm i, \pm j, \pm k\}$ be the quaternion group and let $\sigma \in \mathrm{Aut}(Q_8)$ be defined by 
  \[
    \sigma(i) = j, \quad \sigma(j) = k, \quad \sigma(k) = i.
  \]
  Then, the generalized Alexander quandle $\GAlex(Q_8, \sigma)$ cannot be embedded into any conjugation quandle (cf. \cite[Corollary 3.5 and Proposition 4.8]{Arai-2026-DetectingNonadmissibilityQuandlesColoringsa}), and hence admits no injective quandle homomorphism into $\Conj(GL(V))$ for any vector space $V$.

  On the other hand, the map 
  \[
    \Conj(GL(V)) \to GLQ(V), \quad A \mapsto (0, A)
  \]
  is an injective quandle homomorphism.
  Thus, $\Conj(GL(V))$ can be identified with the subquandle $\{0\} \times GL(V)$ of $GLQ(V)$.
  This shows that quandle homomorphisms into $GLQ(V)$ may be regarded as a generalization of linear representations of quandles. 
  In particular, \cref{thm:LinearRep} shows that every finite quandle admits an injective quandle homomorphism into $GLQ(V)$ for some finite-dimensional vector space $V$, even though it need not admit a faithful linear representation into $\Conj(GL(V))$.
\end{rmk}

\section*{Acknowledgment}
The author would like to thank Katsunori Arai and Hiroshi Tamaru for helpful comments and useful discussions. 
This work was partly supported by MEXT Promotion of Distinctive Joint Research Center Program JPMXP0723833165 and Osaka Metropolitan University Strategic Research Promotion Project (Development of International Research Hubs).
\bibliographystyle{alphaurl}
\bibliography{actqdle}

	
\end{document}